 \date{March 31, 2008}   
\newcommand{\ds}{\displaystyle}
\theoremstyle{plain}
\newtheorem{theorem}{Theorem}[section]
\newtheorem{proposition}[theorem]{Proposition}
\newtheorem{corollary}[theorem]{Corollary}
\newtheorem{lemma}[theorem]{Lemma}
\newtheorem{definition}{Definition}[section]
\numberwithin{equation}{section}
\DeclareMathOperator{\BD}{BD}
\DeclareMathOperator{\QC}{QC}
\renewcommand{\r}{\mathbb{R}}%
\newcommand{\tq}{\, \big| \, }%
\title{Distortion of Mappings and $L_{q,p}$-Cohomology}
\author{Vladimir Gol'dshtein}
\subjclass[2000]{58A12, 30C66}
\keywords{Keywords:  $L_{q,p}$-cohomology,  differential forms, distortion of mappings.}
\address{Department of Mathematics, Ben Gurion University, P.O.Box 653, Beer Sheva, 
84105, Israel}
\email{vladimir@bgu.ac.il}
\author{Marc Troyanov}
\address{M. Troyanov, Section de Math{\'e}matiques,  \'Ecole Polytechnique F{\'e}d\'erale de
Lausanne, 1015 Lausanne - Switzerland}
\email{marc.troyanov@epfl.ch}
\begin{document}
 \maketitle
 
 \begin{abstract}
We study some relation between some geometrically defined classes of diffeomorphisms between
manifolds and the $L_{q,p}$-cohomology of these manifolds. Some applications to vanishing and non vanishing results in $L_{q,p}$-cohomology are given.
\end{abstract}


\medskip

\section{Introduction}

The $L_{q,p}$-cohomology is an invariant of  Riemannian manifolds defined to be the quotient of the space of $p$-integrable closed differential $k$-forms on the manifold modulo the exact forms having a $q$-integrable primitive:
$$
 H^k_{q,p}(M) = \{\omega \tq \omega \text{ is a $k$-form, } |\omega |\in L^p(M) \text{ and } d\omega = 0\}/\{d\theta \tq  |\theta |\in L^q(M) \}.
$$
This invariant has been first defined for the special case $p=q=2$ in the 1970's and has been intensively studied since then, we refer to the book \cite{luck2002} for an overview of $L_2$-cohomology.  The  $L_{q,p}$-cohomology 
has been introduced in the early 1980's as an invariant of the Lipschitz structure of manifolds, see  \cite{GK1}. During the next two decades, the main interest was focused on the case $p=q$, i.e on $L_p$-cohomology, and the last chapter of the book \cite{Grom} by M. Gromov is devoted to this subject ; see also \cite{GK2,Grom,pansu99,pansu2002,pansu2007,pansu2008} for more geometrical applications of $L_p$-cohomology.

Although the $L_{q,p}$-cohomology with $q\neq p$ has attracted less attention, it posses  a richer structure. The subject is also motivated by  its connections with Sobolev type inequalities \cite{GT2006} and quasiconformal geometry  \cite{GT2007}.
See also \cite{GK2,SOL,kopylov2007,kopylov2008}   for other results on $L_{q,p}$-cohomology.

\medskip

When an invariant of a geometric object has been defined, it is important to investigate its functorial properties, i.e. its behavior under various classes of mappings. It is one of our goal in the present paper to describe a natural class of maps which induces morphisms at the level of $L_{q,p}$-cohomology. Our answer is restricted to  the case of diffeomorphisms and is given in Theorem 
\ref{thm:CA}(C) below.  

\medskip

A diffeomorphisms will behave functorially for $L_{q,p}$-cohomology, if its \emph{distortion} is controlled in  some specific way. To explain what is meant by the distortion, consider a diffeomorphism $f : M \to \tilde{M}$ between two Riemannian manifolds. On then define for any $k$ the \emph{principal invariant }of $f$ as
\[
 \sigma_{k}(f,x) =\sum_{i_{1}<i_{2}<\cdots<i_{k}}\lambda_{i_{1}}(x)\lambda_{i_{2}}(x)\cdots\lambda_{i_{k}}(x),
\]
where the $\lambda_i$'s are the principal invariants of $df_x$, i.e. the eigenvalues of $\sqrt{(df_x)^*(df_x)}$. One then  say that $f$ has \emph{bounded $(s,t)$-distortion in degree $k$}, and we write
$
  f \in \BD^k_{(s,t)}(M,\tilde{M}),
$
if
$$
  \left(\sigma_{k}(f,x)\right)^{s} \cdot J_{ f}^{-1}(x) \in L^t(M)
$$
where $J_f$ is the Jacobian of $f$.

\medskip

The class $BD^1_{n,\infty}$ (where $n$ is the dimension of $M$) is exactly the class of quasiconformal diffeomorphisms (also called mappings with bounded distortion), which has been introduced by Y. Reshetnyak in the early 1960's and has been intensively studied since then. The classes $BD^1_{s,\infty}$ has been studied by different authors and under various names, see
 \cite{ferrand,Gaf,GGR,Maz85,Masha,pansu97,Reiman,TV2002,Vdp2000,VU}. The class $BD^{n-1}_{s,\infty}$ also appears in \cite{TV2002}, where some obstructions are given.

\medskip

As a preliminary step to the study of functoriality in $L_{q,p}$-cohomology, 
we study diffeomorphisms $f:M \to \tilde{M}$ that induces bounded operator between the 
Banach spaces of $\tilde{p}$-integrable differential $k$-forms. The result is formulated in Proposition
\ref{pro:prBMD1} : it states that \emph{ a  diffeomorphism $f \in \BD^k_{(\tilde{p},t)}(M,\tilde{M})$ induces a bounded operator $f^{*}:L^{\tilde{p}}(\tilde{M},\Lambda^{k})\rightarrow L^{p}(M,\Lambda^{k})$  if
$p\leq\tilde{p} < \infty$ and $t = \frac{p}{\tilde{p}-p}$.
}
Let us note that finer information are available in the case $k = 1$, see \cite{GGR, GRo,VG76,VU}.

\medskip

To obtain a functoriality in $L_{q,p}$-cohomology, we need to control the distortion of the map $f$
both on $k$-forms and  on $(k-1)$ forms. This is formulated in Theorem \ref{thm:CA}(C), which states in particular that 
 \emph{ a  diffeomorphism $f\in \BD_{(\tilde{q}',r)}^{n-k+1}(M, \tilde{M}) \cap \BD^k_{(\tilde{p},t)}(M,\tilde{M})$  induces a well defined linear map 
 $ f^{*}:H_{\tilde{q},\tilde{p}}^{k}(\tilde{M})\rightarrow H_{q,p}^{k}(M)$ if 
$p\leq\tilde{p}$, $q\leq \tilde{q}$, $t = \frac{p}{\tilde{p}-p}$, $u = \frac{q}{\tilde{q}-q}$ and 
$\tilde{q}' = \frac{\tilde{q}}{\tilde{q}-1}$.
}

\medskip

This is a quite technical result, and it would be nice to be able to give conditions under which the
map $f^{*}$ is injective at the level of $L_{q,p}$-cohomology. But unfortunately, the results we give in section 5 strongly suggest that it will be hard or impossible to find conditions for injectivity, except for the special cases of  quasiconformal or bilipschitz maps. However we have the following result (theorem \ref{thm:CA}(B)), which allows us to prove some vanishing results in $L_{q,p}$-cohomology without requiring the functoriality :  
\emph{If there exists a diffeomorphism
  $f\in \BD_{(\tilde{q}',r)}^{n-k+1}(M, \tilde{M}) \cap \BD^k_{(\tilde{p},t)}(M,\tilde{M})$
  with 
  $p\leq\tilde{p}$, $q\leq \tilde{q}$, $t = \frac{p}{\tilde{p}-p}, r =  \frac{q(\tilde{q}-1)}{q-\tilde{q}}$,  and 
$\tilde{q}' = \frac{\tilde{q}}{\tilde{q}-1}$,
 then   $H_{q,p}^{k}(M) = 0$ implies $H_{\tilde{q},\tilde{p}}^{k}(\tilde{M})= 0$.
} 
We give two concrete examples showing  how this result can be used to prove vanishing and non vanishing results in $L_{q,p}$-cohomology. 

\medskip

The paper is organized as follows. In section 2, we recall the definition of $L_{q,p}$-cohomology and some known facts about the distortion of linear maps. In section 3, we discuss the effect of a diffeomorphism $f$ at the level of $L_{q,p}$-cohomology, assuming that the map $f$ induces 
bounded operators at the level of some Banach spaces of integrable differential forms ; these are abstract results. In section 4, we introduce the class of diffeomorphisms with bounded $(s,t)$-distortion and in section 5, we relate these diffeomorphisms with quasiconformal and bilipshitz maps. Section 6 contains our main results, which relates the distortion of diffeomorphisms to 
$L_{q,p}$-cohomology and in section 7, we give two concrete applications of these results.  In the last section, we shortly discuss our smoothness restrictions.

\section{Preliminary notions}

\subsection{$L_{q,p}$-cohomology}

We shortly recall the definition of $L_{q,p}$-cohomology, referring to the paper \cite{GT2006}
for more details. Let $M$ be an oriented  Riemannian manifold, we denote by 
$C_{c}^{\infty}(M,\Lambda^{k})$ the vector space of
smooth differential forms of degree $k$ with compact support on $M$
and by $L_{loc}^{1}(M,\Lambda^{k})$ the space of differential $k$-forms
whose coefficients (in any local coordinate system) are locally integrable.

The form $\theta\in L_{loc}^{1}(M,\Lambda^{k})$ is said to be the
\emph{weak exterior differential} of $\phi\in L_{loc}^{1}(M,\Lambda^{k-1})$,
and one writes $d\phi=\theta$, if for each $\omega\in C_{c}^{\infty}(M,\Lambda^{n-k})$,
one has 
\[
\int_{M}\theta\wedge\omega=(-1)^{k}\int_{M}\phi\wedge d\omega\,.
\]

Let $L^{p}(M,\Lambda^{k})$ be the Banach space of differential forms in
$L_{loc}^{1}(M,\Lambda^{k})$ such that 
\[
\Vert\theta\Vert_{p}:=\left(\int_{M}|\theta|^{p}dx\right)^{\frac{1}{p}}<\infty\,.
\]
We denote by  $Z_{p}^{k}(M)$ the space of weakly closed forms in $L^{p}(M,\Lambda^{k})$, 
i.e. $Z_{p}^{k}(M)=L^{p}(M,\Lambda^{k})\cap\ker d$. It is a closed subspace. We also define
\[
B_{q,p}^{k}(M):=d\left(L^{q}(M,\Lambda^{k-1})\right)\cap L^{p}(M,\Lambda^{k}),
\]
this is the space of  exact forms in $L^p$ having a primitive in $L^q$ and we have  $B_{q,p}^{k}(M)\subset Z_{p}^{k}(M)$, because $d\circ d=0$.

\begin{definition}
The $L_{q,p}$\emph{-cohomology} of $(M,g)$ (where $1\leq p,q\leq\infty$)
is defined to be the quotient 
\[
H_{q,p}^{k}(M):=Z_{p}^{k}(M)/B_{q,p}^{k}(M)\,,
\]
 and the \emph{reduced} $L_{q,p}$\emph{-cohomology} of $(M,g)$ is
\[
\overline{H}_{q,p}^{k}(M):=Z_{p}^{k}(M)/\overline{B}_{q,p}^{k}(M)\,,
\]
where $\overline{B}_{q,p}^{k}(M)$ is the closure of $B_{q,p}^{k}(M)$.
\end{definition}
The reduced cohomology is naturally a Banach space. 
When $p=q$, we simply speak of $L_{p}$-cohomology and write $H_{p}^{k}(M)$
and $\overline{H}_{p}^{k}(M)$.

\subsection{Linear map between Euclidean spaces}

Recall that an Euclidean vector space $(E,g)$ is a finite dimensional
real vector space equipped with a scalar product. Two linear mappings
$A,B\in L(E_{1};E_{2})$ between two Euclidean vector spaces $(E_{1},g_{1})$ of dimension
$n$ and $m$ are said to
be \emph{orthogonally equivalent} if there exist orthogonal transformations
\ $Q_{1}\in O(E_{1})$ and \ $Q_{2}\in O(E_{2})$ such that \ $B=Q_{2}^{-1}AQ_{1}$,
i.e. the diagram 
\[
\begin{array}{lll}
\quad E_{1} & \overset{A}{\rightarrow} & E_{2}\\
{\tiny Q}_{1}\uparrow &  & \uparrow{\tiny Q}_{2}\\
\quad E_{2} & \overset{B}{\rightarrow} & E_{2}\end{array}
\]
commutes. 
Given a linear mapping $\,\, A:(E_{1},g_{1})\rightarrow(E_{2},g_{2})$,
its (right) \emph{Cauchy-Green tensor}\ $\mathbf{c}$\  is the symmetric
bilinear form on $E_{1}$ defined by \ $\mathbf{c}(x,y)=g_{2}(Ax,Ay)$. 
The \emph{adjoint} of $A$ is the linear map \ $A^{\#}:E_{2}\rightarrow E_{1}$
satisfying 
\[
g_{2}(x,Ay)=g_{1}(A^{\#}x,y)
\]
for all \ $x\in E_{1}$ and $y\in E_{2}$. The Cauchy-Green tensor
and the adjoint are related by
 \[
\mathbf{c}(x,y)=g_{2}(Ax,Ay)=g_{1}(A^{\#}Ax,y).
\]

Let us denote the eigenvalues of $A^{\#}A$ by $\mu_{1},\mu_{2},...,\mu_{n}$. Then $\mu_{i}\in[0,\infty)$, for all $i$, and there exists orthonormal
basis $e_{1},e_{2},\cdots,e_{n}$ of $E_{1}$ and $e_{1}^{\prime},e_{2}^{\prime},\cdots,e_{m}^{\prime}$ of $E_{2}$ such that \ $Ae_{i}=\sqrt{\mu_{i}}e_{i}^{\prime}$ for all $i$. The matrix of $A^{\#}A$ with respect to an orthonormal basis $e_{1},e_{2},\cdots,e_{n}$ of $E_{1}$ 
coincides with the matrix $\mathbf{C}$ of the Cauchy-Green tensor
$\mathbf{c}$ in the same basis.

 \begin{definition} \ 
 The numbers \ $\lambda_{i}=\sqrt{\mu_{i}}$ are called the \emph{principal distortion coefficients} of $A$ or the \emph{singular values} of $A$.
 \end{definition}
 
The principal distortion coefficients can be computed from the distortion polynomial which is defined as follows:

\begin{definition}
Given an arbitrary basis \ $e_{1},e_{2},\cdots,e_{n}$ of $E_{1}$,
we associate to $g_{1}$ and $\mathbf{c}$, the $n\times n$ matrices
\ $\mathbf{G}=\left(g_{1}(e_{i},e_{j})\right)$ and $\mathbf{C}=\left(\mathbf{c}(e_{i},e_{j})\right)$.
The \emph{distortion polynomial}  of \textbf{\ {}}$A$ is the polynomial
\[
P_{A}(t):=\frac{\det(\mathbf{C}-t\mathbf{G})}{\det\mathbf{G}}.
\]
\end{definition}

The distortion polynomial $P_{A}(t)$ 
is independent of the choice of the basis $\left\{ e_{i}\right\} $, it coincides with the characteristic
polynomial of  $AA^{\#}$ and has nonnegative roots. In particular, the roots of $P_{A}$ are the eigenvalues $\mu_i$ of \ $AA^{\#}$ and the   $\lambda_{i}=\sqrt{\mu_{i}}$ are  the  principal distortion coefficients of $A$ and the distortion polynomial can thus be written in terms of
the principal distortion coefficients as 
\[
P_{A}(t)=\prod_{i}(t-\lambda_{i}^2).
\]

The following notion is also useful: 
\begin{definition}
The  \emph{principal invariants of $A$}  are  the  elementary symmetric polynomials in the $\lambda_{i}$'s, 
i.e. they are defined by 
$\sigma_{0}(A) = 1$ and 
\[
\sigma_{k}(A) =\sum_{i_{1}<i_{2}<\cdots<i_{k}}\lambda_{i_{1}}\lambda_{i_{2}}\cdots\lambda_{i_{k}}
\]
for $k = 1,\dots, 2 \dots, n$.
\end{definition}

\medskip

The following result is well known, it can be found e.g. in (\cite{resh}, page 57)

\begin{proposition}\label{pro.inv}
Two linear mappings $A,B\in L(E_{1};E_{2})$  are orthogonally equivalent
if and only if  they have the same principal invariants : 
$\sigma_{k}(A) =\sigma_{k}(B)$ for $k = 1,2,\dots, n$.
\end{proposition}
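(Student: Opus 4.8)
The plan is to route both implications through the intermediate statement that $A$ and $B$ have the same multiset of singular values $\{\lambda_1,\dots,\lambda_n\}$ (counted with multiplicity). I would first show that equality of the principal invariants is equivalent to equality of the singular values, and then that equality of the singular values is equivalent to orthogonal equivalence; chaining the two equivalences gives the proposition.

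For the first equivalence I would exploit the link between elementary symmetric polynomials and the coefficients of a monic polynomial. Since $\sigma_0(A)=1$ and $\sigma_k(A)$ is the $k$-th elementary symmetric polynomial in the $\lambda_i$, one has $\prod_{i}(t-\lambda_i)=\sum_{k=0}^{n}(-1)^k\sigma_k(A)\,t^{n-k}$, so the singular values of $A$ are exactly the roots of this polynomial. Hence $\sigma_k(A)=\sigma_k(B)$ for all $k=1,\dots,n$ (together with $\sigma_0=1$) is equivalent to the polynomials $\prod_i(t-\lambda_i(A))$ and $\prod_i(t-\lambda_i(B))$ being equal, and since the $\lambda_i$ are real (indeed nonnegative), this is equivalent to the two multisets of singular values coinciding. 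This step is elementary but it is the conceptual crux: it is precisely why the $\sigma_k$ encode exactly the information carried by the $\lambda_i$.

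For the direction \emph{orthogonally equivalent $\Rightarrow$ same singular values}, suppose $B=Q_2^{-1}AQ_1$ with $Q_1\in O(E_1)$ and $Q_2\in O(E_2)$. Using that an orthogonal map satisfies $Q^{\#}=Q^{-1}$, so that $(Q_2^{-1})^{\#}Q_2^{-1}=Q_2Q_2^{-1}=\mathrm{Id}$, I would compute $B^{\#}B = Q_1^{\#}A^{\#}(Q_2^{-1})^{\#}Q_2^{-1}AQ_1 = Q_1^{-1}(A^{\#}A)Q_1$. Thus $A^{\#}A$ and $B^{\#}B$ are similar, hence share the same eigenvalues $\mu_i$, and therefore $A$ and $B$ have the same singular values $\lambda_i=\sqrt{\mu_i}$. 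For the converse \emph{same singular values $\Rightarrow$ orthogonally equivalent}, I would invoke the singular value decomposition recalled above: after ordering the common singular values so that $\lambda_i(A)=\lambda_i(B)=:\lambda_i$, choose orthonormal bases with $Ae_i=\lambda_i e_i'$ and $Bf_i=\lambda_i f_i'$, define $Q_1\in O(E_1)$ by $Q_1 f_i=e_i$ and $Q_2\in O(E_2)$ by $Q_2 f_i'=e_i'$, and check on basis vectors that $Q_2^{-1}AQ_1 f_i=\lambda_i f_i'=Bf_i$, so $B=Q_2^{-1}AQ_1$.

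I expect the only mildly delicate point, and hence the main obstacle, to be the bookkeeping of the vanishing singular values when $n\neq m$: when $\lambda_i=0$ the identity $Ae_i=\lambda_i e_i'$ reduces to $0=0$ and imposes no constraint, so one must verify that the orthonormal vectors $e_i',f_i'$ associated with the zero singular values can still be completed to orthonormal bases of $E_2$ and that the maps $Q_1,Q_2$ remain well defined and orthogonal. Once this is handled, the verification $B=Q_2^{-1}AQ_1$ is immediate, so the difficulty is entirely organizational rather than conceptual.
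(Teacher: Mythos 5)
Your proof is correct, but note that the paper itself offers no proof of this proposition: it is stated as a well-known fact with a pointer to (\cite{resh}, page 57), so there is no in-paper argument to compare yours against. Your route is the standard one and it is complete. The reduction of $\sigma_{k}(A)=\sigma_{k}(B)$, $k=1,\dots,n$, to equality of the multisets of singular values via the identity $\prod_{i}(t-\lambda_{i})=\sum_{k=0}^{n}(-1)^{k}\sigma_{k}\,t^{n-k}$ is legitimate precisely because the $\lambda_{i}$ are nonnegative reals, as you observe; the forward implication rests on the correct computation $B^{\#}B=Q_{1}^{\#}A^{\#}(Q_{2}^{-1})^{\#}Q_{2}^{-1}AQ_{1}=Q_{1}^{-1}(A^{\#}A)Q_{1}$, so that $A^{\#}A$ and $B^{\#}B$ are similar and hence isospectral; and the converse is the singular value decomposition argument, where your flagged ``organizational'' point is indeed the only one needing care: the orthonormal vectors $e_{i}'=Ae_{i}/\lambda_{i}$ (for $\lambda_{i}\neq0$), whose number equals $\mathrm{rank}(A)=\mathrm{rank}(B)\leq\min(n,m)$, can always be completed to an orthonormal basis of $E_{2}$, and on indices with $\lambda_{i}=0$ the verification $Q_{2}^{-1}AQ_{1}f_{i}=0=Bf_{i}$ is automatic, so $Q_{2}^{-1}AQ_{1}$ and $B$ agree on a basis of $E_{1}$. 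One cosmetic remark: since the paper introduces the distortion polynomial $P_{A}(t)=\prod_{i}(t-\lambda_{i}^{2})$, you could have phrased your first equivalence as $P_{A}=P_{B}$; your version with $\prod_{i}(t-\lambda_{i})$ carries the same information and is slightly more direct.
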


\bigskip

The  principal invariants of $A$ are related to the action of $A\in L(E_{1};E_{2})$ on the exterior algebras:
Recall that if $E$ is an Euclidean vector space, then the exterior
algebra $\Lambda E$ is equipped with a canonical scalar product. 
If $e_{1},e_{2},\cdots,e_{n}$ is an orthonormal basis of $E_{1}$,
then the $\binom{n}{k}$ multi-vectors $\left\{ e_{i_{1}}\wedge e_{i_{2}}\wedge\cdots\wedge e_{i_{k}}\right\} $
$\left(i_{1}<i_{2}<\cdots<i_{k}\right)$
form an orthonormal basis of $\Lambda^{k}E$. 

To any linear map $A\in L(E_{1};E_{2})$ we associate 
a linear map $\Lambda^{k}A\in L(\Lambda^{k}E_{1};\Lambda^{k}E_{2})$,
and we have 
\begin{equation}
\frac{1}{\binom{n}{k}}\sigma_{k}\leq\left\Vert \Lambda^{k}A\right\Vert \leq\sigma_{k}\label{eq:l1}
\end{equation}
Indeed, suppose that $\lambda_{1}\leq\lambda_{2}\leq\cdots\leq\lambda_{n}$
are the principal distortion coefficients of $A$, then we have \ $\left\Vert \Lambda^{k}A\right\Vert =\lambda_{n-k+1}\lambda_{n-k+2}\cdots\lambda_{n}$
and $\sigma_{k}:=\sum_{i_{1}<i_{2}<\cdots<i_{k}}\lambda_{i_{1}}\lambda_{i_{2}}\cdots\lambda_{i_{k}}$. 

\bigskip

If $E_1=E_2= \r^n$ and $A$ is a diagonal matrix with nonnegative entries, then we have
\[
\sigma_{k}=\mathrm{Trace}(\Lambda^{k}A).
\]

\bigskip
 
The principal distortion coefficients also have the following geometric
interpretation: 

\begin{itemize}
\item[$\circ$] If \ $S\subset E_{1}$ is the unit ball, then \ $A(S)\subset E_{2}$
is an ellipsoid contained in  $\mathrm{Im} A$ and  whose principal axis
are the non vanishing $\lambda_{i}$. 
\item[$\circ$] Suppose $\dim (E_1) = \dim (E_2) = n$. The \emph{Jacobian} $J_{A}:=\sigma_{n}=\lambda_{1}\lambda_{2}...\lambda_{n}$ measures the volume distortion. 
\item[$\circ$]  If $\dim (E_1) = \dim (E_2) = n$ and $A$ is invertible, then the principal distortion coefficients
of $A^{-1}$ are the inverse of the principal distortion coefficients of $A$.
\item[$\circ$]  The norm of $A$ as a linear operator is $\left\Vert A\right\Vert =\max_{v\neq0}\frac{\left\Vert Av\right\Vert }{\left\Vert v\right\Vert }=\max_{i}\lambda_{i}$. 
\end{itemize}

\medskip

\begin{lemma}\label{lem:lin1}  
If $\dim (E_1) = \dim (E_2) = n$ and $A$ is invertible, then for any $0\leq m\leq n$, we have
\[
\sigma_{m}(A^{-1})  =\frac{\sigma_{n-m}(A)}{J_{A}}.
\]
\end{lemma}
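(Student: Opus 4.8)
The plan is to reduce everything to the elementary symmetric polynomials in the principal distortion coefficients and then exploit the bijection between $m$-element subsets and their complements. First I would invoke the third geometric bullet point above: since $A$ is invertible with principal distortion coefficients $\lambda_{1},\lambda_{2},\ldots,\lambda_{n}$ (all strictly positive), the principal distortion coefficients of $A^{-1}$ are exactly $\lambda_{1}^{-1},\lambda_{2}^{-1},\ldots,\lambda_{n}^{-1}$. By the definition of the principal invariants this gives
\[
\sigma_{m}(A^{-1}) = \sum_{i_{1}<i_{2}<\cdots<i_{m}} \frac{1}{\lambda_{i_{1}}\lambda_{i_{2}}\cdots\lambda_{i_{m}}}.
\]

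Next I would rewrite each summand over the common denominator $J_{A}=\lambda_{1}\lambda_{2}\cdots\lambda_{n}$. For a fixed $m$-element subset $I=\{i_{1},\ldots,i_{m}\}\subset\{1,\ldots,n\}$, clearing denominators yields
\[
\frac{1}{\prod_{i\in I}\lambda_{i}} = \frac{1}{J_{A}}\prod_{j\notin I}\lambda_{j},
\]
where the product on the right runs over the complementary $(n-m)$-element set $I^{c}=\{1,\ldots,n\}\setminus I$.

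Then I would observe that the map $I\mapsto I^{c}$ is a bijection between the $m$-element subsets and the $(n-m)$-element subsets of $\{1,\ldots,n\}$. Summing the previous display over all $m$-element subsets $I$ and reindexing by $J=I^{c}$ gives
\[
\sigma_{m}(A^{-1}) = \frac{1}{J_{A}}\sum_{I}\prod_{j\in I^{c}}\lambda_{j} = \frac{1}{J_{A}}\sum_{\substack{J\subset\{1,\ldots,n\}\\ |J|=n-m}}\prod_{j\in J}\lambda_{j} = \frac{\sigma_{n-m}(A)}{J_{A}},
\]
which is exactly the claimed identity. The only genuine content is the complementation bijection in the last step; everything else is unwinding definitions, so I do not anticipate any real obstacle. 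As a sanity check on the boundary conventions $\sigma_{0}=1$ and $\sigma_{n}=J_{A}$, the cases $m=0$ and $m=n$ both reduce to the trivial identities $1=J_{A}/J_{A}$ and $1/J_{A}=1/J_{A}$ respectively.
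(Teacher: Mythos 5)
Your proof is correct and follows exactly the route the paper intends: its proof is the one-line instruction to use the fact that the principal distortion coefficients of $A^{-1}$ are the inverses of those of $A$ and then compute, and your complementation-of-subsets argument is precisely that computation carried out in full.
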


\textbf{Proof} Use the fact the principal distortion coefficients
of $A^{-1}$ are the inverse of the principal distortion coefficients of $A$, and compute.

\qed

\section{Diffeomorphism and $L_{q,p}$-cohomology}

Let $(M,g)$ and $(\tilde{M},\tilde{g})$ be two smooth oriented $n$-dimensional Riemannian
manifolds and $ f:M\rightarrow\tilde{M}$ be a diffeomorphism
such that the induced operator
\[
 f^{*}:L^{\tilde{p}}(\tilde{M},\Lambda^{k})\rightarrow L^{p}(M,\Lambda^{k})
 \]
 is bounded for some specified $p,\tilde{p}\in[0,\infty)$. Then the condition $ f^{*}d=d f^{*}$ implies
that 
\[
 f^{*}:Z_{\tilde{p}}^{k}(\tilde{M})\rightarrow Z_{p}^{k}(M)
\]
is a well defined  bounded operator. In the framework of $L_{q,p}$-cohomology there are two natural questions which then arise:
\begin{enumerate}[i.)]
  \item Suppose that  $\omega\in B_{\tilde{q},\tilde{p}}^{k}(\tilde{M})$. Under what conditions does this
  imply that $f^{*}\omega\in B_{q,p}^{k}(M)$, i.e. that 
  $$
   f^*(B_{\tilde{q},\tilde{p}}^{k}(\tilde{M}))  \subset  B_{q,p}^{k}(M) \ ?
  $$
\item Suppose that $ f^{*}\omega\in B_{q,p}^{k}(M)$. Under what conditions can we conclude that  $\omega\in B_{\tilde{q},\tilde{p}}^{k}(\tilde{M})$, i.e. that 
  $$
   (f^{-1})^*(B_{q,p}^{k}(M))  \subset  B_{\tilde{q},\tilde{p}}^{k}(\tilde{M}) \ ?
  $$

\end{enumerate}
A positive answer to the first question gives us a well defined linear map
\[
  f^{*}:H_{\tilde{q},\tilde{p}}^{k}(\tilde{M})\rightarrow H_{q,p}^{k}(M),
\]
 and  a positive answer to both  questions implies the injectivity of this linear map.
 
 \medskip
 
In this section we give an answer to these questions in terms of boundedness
of  the operators $ f^{*},$ and $f_{*}:=\left( f^{-1}\right)^{*}$. We begin with the second question.

\begin{theorem}\label{thm:thA} 
Let $ f:M\rightarrow\tilde{M}$ be a diffeomorphism, $1 \leq p \leq \tilde{p} < \infty$ and  
$1 \leq \tilde{q} \leq q < \infty$. Assume that  both operators
\[
 f^{*}:L^{\tilde{p}}(\tilde{M},\Lambda^{k})\rightarrow L^{p}(M,\Lambda^{k}),
\quad \text{and} \quad
 f_{*}:L^{q}(M,\Lambda^{k-1})\rightarrow L^{\tilde{q}}(\tilde{M},\Lambda^{k-1})
\]
are bounded.
Then for any  $\omega\in Z_{\tilde{p}}^{k}(\tilde{M})$, we have $f^*\omega\in Z_{p}^{k}(M)$.
Furthermore, if $\left[ f^{*}\omega\right]=0$
in $H_{q,p}^{k}(M)$ then $\left[\omega\right]=0$ in $H_{\tilde{q},\tilde{p}}^{k}(\tilde{M})$
(thus $H_{q,p}^{k}(M) = 0 \Rightarrow H_{\tilde{q},\tilde{p}}^{k}(\tilde{M})= 0$).
\end{theorem}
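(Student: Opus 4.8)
The plan is to prove the first assertion directly from the boundedness of $f^{*}$ together with the naturality of the exterior differential, and to prove the second by \emph{transporting a primitive} of $f^{*}\omega$ from $M$ back to $\tilde{M}$ through the operator $f_{*}=(f^{-1})^{*}$. The only genuinely technical ingredient is the commutation of the \emph{weak} exterior differential with pullback, which I would isolate and treat last.

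First I would dispose of the claim that $f^{*}\omega\in Z_{p}^{k}(M)$ whenever $\omega\in Z_{\tilde{p}}^{k}(\tilde{M})$. Since $f^{*}$ maps $L^{\tilde{p}}(\tilde{M},\Lambda^{k})$ boundedly into $L^{p}(M,\Lambda^{k})$, we have $f^{*}\omega\in L^{p}(M,\Lambda^{k})$, and as $d\omega=0$ the commutation $f^{*}d=df^{*}$ gives $d(f^{*}\omega)=f^{*}(d\omega)=0$, so $f^{*}\omega$ is weakly closed. For the second assertion, suppose $[f^{*}\omega]=0$ in $H_{q,p}^{k}(M)$, that is $f^{*}\omega=d\theta$ for some $\theta\in L^{q}(M,\Lambda^{k-1})$. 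The natural candidate primitive of $\omega$ on $\tilde{M}$ is $\tilde{\theta}:=f_{*}\theta=(f^{-1})^{*}\theta$, and boundedness of $f_{*}:L^{q}(M,\Lambda^{k-1})\to L^{\tilde{q}}(\tilde{M},\Lambda^{k-1})$ immediately places $\tilde{\theta}\in L^{\tilde{q}}(\tilde{M},\Lambda^{k-1})$. Granting the commutation $df_{*}=f_{*}d$, one computes
\[
d\tilde{\theta}=d(f_{*}\theta)=f_{*}(d\theta)=f_{*}(f^{*}\omega)=\omega ,
\]
where the last equality uses $f_{*}f^{*}=(f^{-1})^{*}f^{*}=(f\circ f^{-1})^{*}=\mathrm{id}$ on forms over $\tilde{M}$. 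Hence $\omega=d\tilde{\theta}$ with $\tilde{\theta}\in L^{\tilde{q}}$ and $\omega\in L^{\tilde{p}}$, so $\omega\in B_{\tilde{q},\tilde{p}}^{k}(\tilde{M})$ and $[\omega]=0$. The implication $H_{q,p}^{k}(M)=0\Rightarrow H_{\tilde{q},\tilde{p}}^{k}(\tilde{M})=0$ is then the special case in which, $H_{q,p}^{k}(M)$ being trivial, $[f^{*}\omega]=0$ holds automatically for every $\omega\in Z_{\tilde{p}}^{k}(\tilde{M})$.

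The point that needs care, and which I regard as the main obstacle, is the commutation of the weak differential with pullback, since $\theta$ and $\omega$ are merely $L^{1}_{loc}$ forms admitting weak derivatives rather than smooth forms. I would establish $d(f_{*}\theta)=f_{*}(d\theta)$ directly from the defining integral identity: for a test form $\psi\in C_{c}^{\infty}(\tilde{M},\Lambda^{n-k})$, write $\psi=F^{*}(f^{*}\psi)$ with $F=f^{-1}$, use multiplicativity $F^{*}(\alpha\wedge\beta)=F^{*}\alpha\wedge F^{*}\beta$ and the change-of-variables formula to pass from $\int_{\tilde{M}}f_{*}(d\theta)\wedge\psi$ to $\int_{M}d\theta\wedge f^{*}\psi$, apply the weak-derivative relation for $\theta$ on $M$ (legitimate because $f^{*}\psi\in C_{c}^{\infty}(M,\Lambda^{n-k})$ and the smooth naturality $d(f^{*}\psi)=f^{*}(d\psi)$ is available), and finally change variables back. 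This reduces the weak commutation to smooth naturality of $d$ plus the diffeomorphism change of variables, the only bookkeeping being the orientation sign, which is harmless for a diffeomorphism of oriented manifolds. I emphasize that the boundedness hypotheses play no role in this commutation step; they serve only to guarantee that $f^{*}\omega$ and $\tilde{\theta}=f_{*}\theta$ land in the correct integrability classes $L^{p}$ and $L^{\tilde{q}}$.
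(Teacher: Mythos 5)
Your proof is correct and follows essentially the same route as the paper's: pull back to get $f^{*}\omega\in Z_{p}^{k}(M)$, take a primitive $\theta\in L^{q}$ of $f^{*}\omega$, push it forward by $f_{*}$, and use $d f_{*}=f_{*}d$ together with $f_{*}f^{*}=\mathrm{id}$ to exhibit $\omega=d(f_{*}\theta)\in B_{\tilde{q},\tilde{p}}^{k}(\tilde{M})$. The only difference is that you spell out the justification of the weak-differential commutation via test forms and change of variables, a point the paper takes for granted for diffeomorphisms (deferring the non-smooth case to its Section 8); this is a legitimate and correctly executed addition, not a change of method.
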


\bigskip

\textbf{Remarks \ } We should not conclude that  $f^{*}:H_{\tilde{q},\tilde{p}}^{k}(\tilde{M})\rightarrow H_{q,p}^{k}(M)$ is an injective map, because this map is a priory not even well defined.
\medskip

\medskip

\begin{proof}
Choose $\omega\in Z_{\tilde{p}}^{k}(\tilde{M})$. Because $ f^{*}:L^{\tilde{p}}(\tilde{M},\Lambda^{k})\rightarrow L^{p}(M,\Lambda^{k})$ 
is a bounded operator, $f^{*}\omega\in L^{p}(M,\Lambda^{k})$,
and since  and $d (f^{*}\omega) = f^{*}d\omega = 0$ we have $f^{*}\omega\in Z_{p}^{k}(M)$. 
Suppose now that $\left[ f^{*}\omega\right]=0$
in $H_{q,p}^{k}(M)$, then $ f^{*}\omega\in B_{q,p}^{k}(M)$, that is  there exists $\theta\in L^{q}(M,\Lambda^{k-1})$ such
that $d\theta= f^{*}\omega$. But by the second hypothesis the
operator $ f_{*}:L^{q}(M,\Lambda^{k})\rightarrow L^{\tilde{q}}(\tilde{M},\Lambda^{k})$
is bounded and therefore $ f_{*}\theta\in L^{\tilde{q}}(\tilde{M},\Lambda^{k})$.
We then have
$$
\omega =  f_{*}\left( f^{*}\omega\right) = f_{*}d\theta = d\left( f_{*}\theta\right) \in B_{\tilde{q},\tilde{p}}^{k}(\tilde{M})
$$
Therefore $\left[\omega\right]=0$ in $H_{\tilde{q},\tilde{p}}^{k}(\tilde{M})$.
\end{proof}

The argument of the previous proof  is illustrated in the following commutative diagrams:
\[
\begin{array}{ccc}
\quad Z_{\tilde{p}}^{k}(\tilde{M}) & \overset{ f^{*}}{\longrightarrow} & Z^k_p(M)\\ 
{\tiny d}{\uparrow} &  & \uparrow{\tiny d}\\
\quad L^{\tilde{q}}(\tilde{M},\Lambda^{k-1}) & \overset{ f_{*}}{\longleftarrow} & L^{q}(M,\Lambda^{k-1})\end{array}
\qquad
\begin{array}{ccc}
\quad\omega & \overset{ f^{*}}{\longrightarrow} &  f^{*}\omega\\
{\tiny d}\uparrow &  & \uparrow{\tiny d}\\
\quad f_{*}\theta & \overset{ f_{*}}{\longleftarrow} & \theta\end{array}
\]

\bigskip 

The next result gives us sufficient conditions for a diffeomorphism to behave functorially  at the  $L_{q,p}$-cohomology level.
 
\begin{theorem}\label{thm:thB} 
Let $ f:M\rightarrow\tilde{M}$ be a diffeomorphism and $1 \leq p \leq \tilde{p} < \infty$ and  
$1 \leq q \leq \tilde{q} < \infty$. Assume that
\[
 f^{*}:L^{\tilde{p}}(\tilde{M},\Lambda^{k})\rightarrow L^{p}(M,\Lambda^{k}),
\quad \text{and} \quad
 f^{*}:L^{\tilde{q}}(\tilde{M},\Lambda^{k-1})\rightarrow L^{q}(M,\Lambda^{k-1})
 \]
are bounded operators.
Then 
\begin{enumerate}[a.)]
  \item $f^{*}:\Omega_{\tilde{q},\tilde{p}}^{k-1}(\tilde{M})\rightarrow\Omega_{q,p}^{k-1}(M)$
is a bounded operator, 
  \item $ f^{*}:H_{\tilde{q},\tilde{p}}^{k}(\tilde{M})\rightarrow H_{q,p}^{k}(M)$
is a well defined linear map,
  \item $ f^{*}:\overline{H}_{\tilde{q},\tilde{p}}^{k}(\tilde{M})\rightarrow \overline{H}_{q,p}^{k}(M)$
is a well defined bounded operator,
\end{enumerate}
\end{theorem}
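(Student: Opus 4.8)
The plan is to establish the three assertions in order, deriving each from the previous one together with the two boundedness hypotheses and the single nontrivial analytic fact that $f^{*}$ commutes with the weak exterior differential, i.e. $d(f^{*}\eta) = f^{*}(d\eta)$ for any $\eta \in L^{1}_{loc}$ admitting a weak differential. This commutation is the real content: it holds for smooth forms by naturality of the exterior derivative and extends to weakly differentiable forms because $f$ is a diffeomorphism, so that the defining pairing $\int \theta\wedge\omega = (-1)^{k}\int\phi\wedge d\omega$ is preserved under the change of variables $y=f(x)$ applied to test forms of the shape $(f^{-1})^{*}\omega$. I would state this commutation as a lemma (or simply invoke the remark opening this section, where $f^{*}d = df^{*}$ is already taken for granted) and treat everything else as functional-analytic bookkeeping.

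For part (a), recall that $\Omega^{k-1}_{q,p}(M)$ is the Banach space of forms $\theta\in L^{q}(M,\Lambda^{k-1})$ whose weak differential $d\theta$ lies in $L^{p}(M,\Lambda^{k})$, equipped with the graph norm $\Vert\theta\Vert_{\Omega}=\Vert\theta\Vert_{q}+\Vert d\theta\Vert_{p}$. Given $\theta\in\Omega^{k-1}_{\tilde{q},\tilde{p}}(\tilde{M})$, the second hypothesis gives $f^{*}\theta\in L^{q}(M,\Lambda^{k-1})$ with $\Vert f^{*}\theta\Vert_{q}\leq C_{1}\Vert\theta\Vert_{\tilde{q}}$, while the commutation $d(f^{*}\theta)=f^{*}(d\theta)$ together with the first hypothesis gives $d(f^{*}\theta)\in L^{p}(M,\Lambda^{k})$ with $\Vert d(f^{*}\theta)\Vert_{p}\leq C_{2}\Vert d\theta\Vert_{\tilde{p}}$. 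Hence $f^{*}\theta\in\Omega^{k-1}_{q,p}(M)$ and $\Vert f^{*}\theta\Vert_{\Omega}\leq\max(C_{1},C_{2})\,\Vert\theta\Vert_{\Omega}$, which is exactly the asserted boundedness.

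For part (b), I first note that the first hypothesis and the commutation already give a bounded operator $f^{*}:Z^{k}_{\tilde{p}}(\tilde{M})\to Z^{k}_{p}(M)$, since a weakly closed $L^{\tilde{p}}$-form is sent to a weakly closed $L^{p}$-form, because $d(f^{*}\omega)=f^{*}(d\omega)=0$. To see that $f^{*}$ descends to cohomology it then suffices to check $f^{*}\bigl(B^{k}_{\tilde{q},\tilde{p}}(\tilde{M})\bigr)\subset B^{k}_{q,p}(M)$. If $\omega\in B^{k}_{\tilde{q},\tilde{p}}(\tilde{M})$, then $\omega=d\theta$ with $\theta\in L^{\tilde{q}}(\tilde{M},\Lambda^{k-1})$ and $\omega\in L^{\tilde{p}}$, so $\theta\in\Omega^{k-1}_{\tilde{q},\tilde{p}}(\tilde{M})$; by part (a), $f^{*}\theta\in\Omega^{k-1}_{q,p}(M)$, so that $f^{*}\omega=f^{*}(d\theta)=d(f^{*}\theta)$ is the differential of an $L^{q}$-primitive and lies in $L^{p}$, i.e. $f^{*}\omega\in B^{k}_{q,p}(M)$. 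Thus $f^{*}$ carries cocycles to cocycles and coboundaries to coboundaries, hence induces a well defined linear map on the quotients $H^{k}_{\tilde{q},\tilde{p}}(\tilde{M})\to H^{k}_{q,p}(M)$.

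For part (c), I would upgrade the inclusion of coboundaries to their closures. Since $f^{*}:Z^{k}_{\tilde{p}}(\tilde{M})\to Z^{k}_{p}(M)$ is continuous and a continuous map carries the closure of a set into the closure of its image, part (b) yields
\[
f^{*}\bigl(\overline{B^{k}_{\tilde{q},\tilde{p}}(\tilde{M})}\bigr)\subset\overline{f^{*}\bigl(B^{k}_{\tilde{q},\tilde{p}}(\tilde{M})\bigr)}\subset\overline{B^{k}_{q,p}(M)},
\]
the closures being taken in the respective $L^{p}$- and $L^{\tilde{p}}$-topologies of the ambient spaces of closed forms. Consequently $f^{*}$ descends to the reduced cohomology, and the induced map $\overline{H}^{k}_{\tilde{q},\tilde{p}}(\tilde{M})\to\overline{H}^{k}_{q,p}(M)$ is bounded, being the map induced on quotients-by-closed-subspaces by the bounded operator $f^{*}:Z^{k}_{\tilde{p}}(\tilde{M})\to Z^{k}_{p}(M)$, whose operator norm it does not exceed. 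The only genuinely delicate point in the whole argument is the commutation of $f^{*}$ with the weak differential; once that is secured, assertions (a)--(c) are essentially formal.
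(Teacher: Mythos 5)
Your proof is correct and follows essentially the same route as the paper's: part (a) by combining the two boundedness hypotheses with the commutation $d f^{*} = f^{*} d$, part (b) by showing that cocycles map to cocycles and coboundaries to coboundaries via (a), and part (c) by passing to closures under continuity and noting the induced quotient map is bounded. The only cosmetic difference is that in (c) you invoke the continuity of $f^{*}:Z_{\tilde{p}}^{k}(\tilde{M})\rightarrow Z_{p}^{k}(M)$ where the paper cites the continuity of $f^{*}:\Omega_{\tilde{q},\tilde{p}}^{k}(\tilde{M})\rightarrow\Omega_{q,p}^{k}(M)$; since the closures defining reduced cohomology are taken in the $L^{\tilde{p}}$- and $L^{p}$-topologies, your choice is if anything the more directly relevant one.
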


\medskip

\begin{proof}
a) By definition $\omega\in\Omega_{\tilde{q},\tilde{p}}^{k-1}(\tilde{M})$
if $\omega\in L^{\tilde{q}}(\tilde{M},\Lambda^{k-1})$ and $d\omega\in L^{\tilde{p}}(\tilde{M},\Lambda^{k})$.
Because both operators $ f^{*}:L^{\tilde{p}}(\tilde{M},\Lambda^{k})\rightarrow L^{p}(M,\Lambda^{k})$,
$ f^{*}:L^{\tilde{q}}(\tilde{M},\Lambda^{k-1})\rightarrow L^{q}(M,\Lambda^{k-1})$
are bounded and $ f^{*}d\omega=d f^{*}\omega$ we obtain
that $ f^{*}\omega\in\Omega_{q,p}^{k-1}(M)$. The operator
$ f^{*}:\Omega_{\tilde{q},\tilde{p}}^{k-1}(\tilde{M})\rightarrow\Omega_{q,p}^{k-1}(M)$
is clearly bounded.

\medskip

b) The condition $ f^{*}d=d f^{*}$ and the boundedness of the operators
$ f^{*}:L^{\tilde{p}}(\tilde{M},\Lambda^{k})\rightarrow L^{p}(M,\Lambda^{k})$
implies that $ f^{*}\left(Z_{\tilde{p}}^{k}(\tilde{M})\right)\subset Z_{p}^{k}(M)$.
Using the boundedness of the operator $ f^{*}:\Omega_{\tilde{q},\tilde{p}}^{k}(\tilde{M})\rightarrow\Omega_{q,p}^{k}(M)$ and the condition $ f^{*}d=d f^{*}$ we see that
\[
 f^{*}\left(B_{\tilde{q},\tilde{p}}^{k}(\tilde{M})\right)= f^{*}\left(d\Omega_{\tilde{q},\tilde{p}}^{k-1}(\tilde{M})\right)=d f^{*}\left(\Omega_{\tilde{q},\tilde{p}}^{k-1}(\tilde{M})\right)\subset d\left(\Omega_{q,p}^{k-1}(M)\right)=B_{q,p}^{k}(M).
 \]
The inclusions 
\begin{equation}\label{incl.t1}
f^{*}\left(Z_{\tilde{p}}^{k}(\tilde{M})\right)\subset Z_{p}^{k}(M),
 \qquad
 f^{*}\left(B_{\tilde{q},\tilde{p}}^{k}(\tilde{M})\right)\subset B_{q,p}^{k}(M)
\end{equation}
imply that the linear map
\[
 f^{*}:H_{\tilde{q},\tilde{p}}^{k}(\tilde{M})=Z_{\tilde{p}}^{k}(\tilde{M})/B_{\tilde{q},\tilde{p}}^{k}(\tilde{M})\rightarrow Z_{p}^{k}(M)/B_{q,p}^{k}(M)=H_{q,p}^{k}(M)
 \]
is well defined.

\medskip

c) Using the inclusions (\ref{incl.t1}) and the continuity of the operator $ f^{*}:\Omega_{\tilde{q},\tilde{p}}^{k}(\tilde{M})\rightarrow\Omega_{q,p}^{k}(M)$, we have
\begin{equation}\label{incl.t2}
 f^{*}\left(\overline{B_{\tilde{q},\tilde{p}}^{k}(\tilde{M})}\right)\subset\overline{ f^{*}\left(B_{\tilde{q},\tilde{p}}^{k}(\tilde{M})\right)}\subset\overline{B_{q,p}^{k}(M)}.
\end{equation}
Therefore the operator
\[
 f^{*}:\overline{H}_{\tilde{q},\tilde{p}}^{k}(\tilde{M})=Z_{\tilde{p}}^{k}(\tilde{M})/\overline{B_{\tilde{q},\tilde{p}}^{k}(\tilde{M})}\rightarrow Z_{p}^{k}(M)/\overline{B_{q,p}^{k}(M)}=\overline{H}_{q,p}^{k}(M)
 \]
is well defined and bounded.

\end{proof}

Using the two previous theorems, we have the following result:

\begin{theorem}\label{thm:thC}   
Let $ f:M\rightarrow\tilde{M}$ be a diffeomorphism and $1 \leq p \leq \tilde{p} \leq \infty$ and  
$1 \leq \tilde{q} = q \leq \infty$.  Assume that the operator
$
 f^{*}:L^{\tilde{p}}(\tilde{M},\Lambda^{k})\rightarrow L^{p}(M,\Lambda^{k})
$
is bounded and that
$
 f^{*}:L^{q}(\tilde{M},\Lambda^{k-1})\rightarrow L^{q}(M,\Lambda^{k-1})
$
is an isomorphism of Banach spaces.
Then the linear map
\[
 f^{*}:H_{q,\tilde{p}}^{k}(\tilde{M})\rightarrow H_{q,p}^{k}(M)
\]
is well defined and injective.
\end{theorem}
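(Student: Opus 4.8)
The plan is to obtain this statement as a direct consequence of the two preceding theorems, using the isomorphism hypothesis as the bridge that supplies boundedness in \emph{both} directions. The key point is that an isomorphism of Banach spaces $f^{*}:L^{q}(\tilde{M},\Lambda^{k-1})\to L^{q}(M,\Lambda^{k-1})$ is at once a bounded operator (which will feed Theorem \ref{thm:thB} to yield well-definedness) and the owner of a bounded inverse (which will feed Theorem \ref{thm:thA} to yield injectivity). So no new analytic input is needed beyond recognizing how the single hypothesis of Theorem \ref{thm:thC} activates both earlier results once we set $\tilde{q}=q$.

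First I would establish well-definedness. With $\tilde{q}=q$, the exponent conditions $1\leq p\leq\tilde{p}$ and $1\leq q\leq\tilde{q}$ of Theorem \ref{thm:thB} hold, the latter as an equality. The two boundedness requirements of that theorem are precisely the boundedness of $f^{*}:L^{\tilde{p}}(\tilde{M},\Lambda^{k})\to L^{p}(M,\Lambda^{k})$ and of $f^{*}:L^{q}(\tilde{M},\Lambda^{k-1})\to L^{q}(M,\Lambda^{k-1})$, the second being part of the isomorphism assumption. Theorem \ref{thm:thB}(b) then gives that $f^{*}:H_{q,\tilde{p}}^{k}(\tilde{M})\to H_{q,p}^{k}(M)$ is well defined.

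Next I would prove injectivity via Theorem \ref{thm:thA}. The crucial observation is that $f_{*}=(f^{-1})^{*}$ is the operator inverse of $f^{*}$: by functoriality of pullback one has $(f^{-1})^{*}\circ f^{*}=\mathrm{id}$ and $f^{*}\circ(f^{-1})^{*}=\mathrm{id}$, so $f_{*}=(f^{*})^{-1}$. Since $f^{*}:L^{q}(\tilde{M},\Lambda^{k-1})\to L^{q}(M,\Lambda^{k-1})$ is an isomorphism of Banach spaces, the operator $f_{*}:L^{q}(M,\Lambda^{k-1})\to L^{q}(\tilde{M},\Lambda^{k-1})$ is bounded. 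With $\tilde{q}=q$ the inequalities $1\leq p\leq\tilde{p}$ and $1\leq\tilde{q}\leq q$ required by Theorem \ref{thm:thA} hold, and both of its boundedness hypotheses are satisfied. Hence for every $\omega\in Z_{\tilde{p}}^{k}(\tilde{M})$, the vanishing $[f^{*}\omega]=0$ in $H_{q,p}^{k}(M)$ forces $[\omega]=0$ in $H_{q,\tilde{p}}^{k}(\tilde{M})$. As the map produced in the first step is linear, this is exactly the injectivity of $f^{*}:H_{q,\tilde{p}}^{k}(\tilde{M})\to H_{q,p}^{k}(M)$.

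The only genuine obstacle I anticipate is an endpoint mismatch in the exponent ranges: Theorems \ref{thm:thA} and \ref{thm:thB} are stated with $\tilde{p},\tilde{q}<\infty$, whereas the present statement permits $\tilde{p}\leq\infty$ and $q\leq\infty$. Since the conceptual content is entirely contained in those two theorems, the remaining task is to check that their arguments survive at the boundary values $\tilde{p}=\infty$ or $q=\infty$. I expect this to be routine, because the proofs of Theorems \ref{thm:thA} and \ref{thm:thB} use only the boundedness of the relevant operators and the commutation relation $f^{*}d=df^{*}$, neither of which is sensitive to the finiteness of the exponents; if one prefers, the endpoint cases can instead be handled by repeating verbatim the diagram-chasing argument behind Theorem \ref{thm:thA} with $\tilde{q}=q$.
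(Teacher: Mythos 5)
Your proposal is correct and follows exactly the route the paper intends: the paper's own proof is literally the single line ``The proof is immediate,'' meaning precisely the combination you spell out --- Theorem \ref{thm:thB} (fed by the bounded-operator half of the isomorphism hypothesis) for well-definedness, and Theorem \ref{thm:thA} (fed by the bounded inverse $f_{*}=(f^{*})^{-1}$) for injectivity. Your attention to the endpoint exponents $\tilde{p}=\infty$ or $q=\infty$, which the paper silently glosses over, is a point in your favor rather than a deviation.
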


\medskip 

The proof is immediate.

\qed

\bigskip

\begin{corollary}\label{cor.C}
Let $f : M \to \tilde{M}$ satisfying the hypothesis of the previous theorem. 
If $T_{q,p}^{k}(M)=0$ then $T_{\tilde {q},\tilde{p}}^{k}(\tilde{M})=0$.
\end{corollary}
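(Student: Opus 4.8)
The plan is to deduce the statement from a short diagram chase combining Theorem \ref{thm:thC} with part (c) of Theorem \ref{thm:thB}. Recall that the torsion subspace is the kernel of the canonical surjection from unreduced to reduced cohomology, so that $T^{k}_{q,p}(M)=\overline{B}^{k}_{q,p}(M)/B^{k}_{q,p}(M)$, and the vanishing $T^{k}_{q,p}(M)=0$ is equivalent to the injectivity of the natural projection $\pi_{M}:H^{k}_{q,p}(M)\to\overline{H}^{k}_{q,p}(M)$; likewise on $\tilde{M}$. Since the hypotheses of Theorem \ref{thm:thC} impose $\tilde{q}=q$, the object $T^{k}_{\tilde{q},\tilde{p}}(\tilde{M})$ to be killed is just $T^{k}_{q,\tilde{p}}(\tilde{M})=\ker\pi_{\tilde{M}}$.

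First I would check that, under the hypotheses of Theorem \ref{thm:thC}, the assumptions of Theorem \ref{thm:thB} are met: the equality $\tilde{q}=q$ makes the inequality $q\leq\tilde{q}$ trivial, and the isomorphism $f^{*}:L^{q}(\tilde{M},\Lambda^{k-1})\to L^{q}(M,\Lambda^{k-1})$ is in particular bounded, so that part (c) of Theorem \ref{thm:thB} supplies a bounded operator $f^{*}:\overline{H}_{q,\tilde{p}}^{k}(\tilde{M})\to\overline{H}_{q,p}^{k}(M)$ on reduced cohomology. Together with the \emph{injective} map $f^{*}:H_{q,\tilde{p}}^{k}(\tilde{M})\to H_{q,p}^{k}(M)$ produced by Theorem \ref{thm:thC}, these fit into the commutative square
\[
\begin{array}{ccc}
H_{q,\tilde{p}}^{k}(\tilde{M}) & \overset{f^{*}}{\longrightarrow} & H_{q,p}^{k}(M)\\
{\scriptstyle \pi_{\tilde{M}}}\downarrow &  & \downarrow{\scriptstyle \pi_{M}}\\
\overline{H}_{q,\tilde{p}}^{k}(\tilde{M}) & \overset{f^{*}}{\longrightarrow} & \overline{H}_{q,p}^{k}(M)
\end{array}
\]
whose vertical arrows are the reduction projections. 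Its commutativity is essentially tautological, since all four maps are induced by the single pullback $f^{*}$ on forms and the projections merely coarsen the equivalence relation (mod $B$ versus mod $\overline{B}$).

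The main step is then the diagram chase. Assuming $T^{k}_{q,p}(M)=0$, the right vertical arrow $\pi_{M}$ is injective. Pick a class $[\omega]\in T^{k}_{q,\tilde{p}}(\tilde{M})=\ker\pi_{\tilde{M}}$. Chasing the square, $\pi_{M}\bigl(f^{*}[\omega]\bigr)=f^{*}\bigl(\pi_{\tilde{M}}[\omega]\bigr)=0$, and injectivity of $\pi_{M}$ forces $f^{*}[\omega]=0$ in $H_{q,p}^{k}(M)$. Since $f^{*}:H_{q,\tilde{p}}^{k}(\tilde{M})\to H_{q,p}^{k}(M)$ is injective by Theorem \ref{thm:thC}, we conclude $[\omega]=0$. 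Hence $\ker\pi_{\tilde{M}}=T^{k}_{q,\tilde{p}}(\tilde{M})=T^{k}_{\tilde{q},\tilde{p}}(\tilde{M})=0$, as claimed.

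I do not expect a genuine obstacle: the content of the argument is entirely formal once the two horizontal maps are in place. The only point requiring a little care is confirming that the hypotheses of Theorem \ref{thm:thB} are indeed subsumed by those of Theorem \ref{thm:thC}, so that the bottom arrow of the square legitimately exists and the chase is available; the injectivity that does the real work is imported directly from Theorem \ref{thm:thC}.
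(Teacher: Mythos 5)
Your proof is correct and is essentially the paper's own argument: both rest on the inclusion $f^{*}\bigl(\overline{B_{\tilde{q},\tilde{p}}^{k}(\tilde{M})}\bigr)\subset\overline{B_{q,p}^{k}(M)}$ supplied by Theorem \ref{thm:thB}, the identification of $T_{q,p}^{k}(M)=0$ with $\overline{B_{q,p}^{k}(M)}=B_{q,p}^{k}(M)$, and the injectivity of $f^{*}$ on unreduced cohomology from Theorem \ref{thm:thC}. The only difference is presentational: the paper chases an element $\omega\in\overline{B_{\tilde{q},\tilde{p}}^{k}(\tilde{M})}$ directly through these inclusions, while you package the same steps as a commutative-square diagram chase with the reduction projections $\pi_{M},\pi_{\tilde{M}}$.
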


\medskip

\textbf{Proof} Since $T_{q,p}^{k}(M)=0$, we have $\overline{B_{q,p}^{k}(M)} ={B_{q,p}^{k}(M)}$.
The hypothesis of  Theorem \ref{thm:thB} are satisfied, thus the inclusions (\ref{incl.t2}) holds and
we thus have
$$
 f^{*}\left(\overline{B_{\tilde{q},\tilde{p}}^{k}(\tilde{M})}\right)\subset\overline{B_{q,p}^{k}(M)}
 = {B_{q,p}^{k}(M)}.
$$
Choose now  an arbitrary element 
 $\omega\in \overline{B_{\tilde{q},\tilde{p}}^{k}(\tilde{M})}$. We have  $f^{*}\omega\in {B_{q,p}^{k}(M)}$
 by the previous inclusion, this means that  $[f^{*}\omega] = 0\in {H_{q,p}^{k}(M)}$, but $ f^{*}:H_{\tilde{q},\tilde{p}}^{k}(\tilde{M})\rightarrow H_{q,p}^{k}(M)$ is injective by the previous theorem and therefore
$[\omega] = 0$ in $H_{q,\tilde{p}}^{k}(\tilde{M})$, that is $\omega \in {B_{q,\tilde{p}}^{k}(\tilde{M})}$.
Since $\omega$ was arbitrary, we have shown that $\overline{B_{q,\tilde{p}}^{k}(\tilde{M})}={B_{q,\tilde{p}}^{k}(\tilde{M})}$, i.e. $T_{q,\tilde{p}}^{k}(\widetilde{M})=0$.

\qed

 \bigskip

 \textbf{Remark}  The hypothesis in Theorem  \ref{thm:thC} seem to be very restrictive, the results of section 5 suggest that it will be difficult to find diffeomorphisms satisfying these hypothesis and which aren't bilipshitz or quasiconformal.  See  the discussion at the end of section 5.

\section{Diffeomorphisms with controlled distortion.}

Let $(M,g)$ and $(\tilde{M},\tilde{g})$ be two smooth oriented Riemannian manifolds. 
In this section we study classes of diffeomorphisms $f:M\rightarrow\tilde{M}$ with bounded distortion of an integral type  that induce bounded operators 
$ f^{*}:L^{\tilde{p}}(\tilde{M},\Lambda^{k})\rightarrow L^{p}(M,\Lambda^{k})$ for $1\leq p \leq \tilde{p}  \leq \infty$. 
To define these classes we use the notation 
$$\sigma_{k}(f,x) = \sigma_{k}(d f_{x})$$
for the $k$-th principal invariant of the differential $df_x$. We also write $\sigma_{k}(f)$
when there is no risk of confusion, observe that $\sigma_{n}(f) = J_{f}$, where $J_f$ is the
Jacobian of $f$.

\medskip

\begin{definition}\label{def.BMD}
A diffeomorphism  $f:M\rightarrow\tilde{M}$ is said to be of \emph{bounded
$(s,t)$-distortion in degree $k$}, and we write
$
  f \in \BD^k_{(s,t)}(M,\tilde{M}),
$
if
$$
   \left(\sigma_{k}(f)\right)^{s} J_{ f}^{-1} \in L^t(M).
$$
It is assumed that $1\leq s < \infty$ and $0< t \leq \infty$.
\end{definition}

It is convenient to introduce the quantity 
\[
 K_{s,t,k}(f) = \left\Vert \frac {\left(\sigma_{k}(f)\right)^{s}}{ J_{ f}(x)} \right\Vert _{L^t(M)},
\]
the mapping $f$ belongs then to  $\BD^k_{(s,t)}(M,\tilde{M}),$ if and only if $K_{s,t,k}(f)<\infty$.

\medskip

\medskip

\begin{proposition}\label{pro:prBMD1}
Let   $f:M\rightarrow\tilde{M}$ be a  diffeomorphism.
Suppose $p\leq\tilde{p} < \infty$ and for any $\omega\in L^{\tilde{p}}(\tilde{M},\Lambda^{k})$ we
have 
\[
\left\Vert  f^{*}\omega\right\Vert _{L^{p}(M,\Lambda^{k})}\leq\left(K_{\tilde{p},t,k}( f)\right)^{1/\tilde{p}}\left\Vert \omega\right\Vert _{L^{\tilde{p}}(\tilde{M},\Lambda^{k})}
\]
where $t = \frac{p}{\tilde{p}-p}$. 
In particular if $f \in \BD^k_{(\tilde{p},t)}(M,\tilde{M})$, then the operator
\[
 f^{*}:L^{\tilde{p}}(\tilde{M},\Lambda^{k})\rightarrow L^{p}(M,\Lambda^{k})
\]
is bounded.
\end{proposition}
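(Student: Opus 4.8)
The plan is to reduce the claimed operator bound to a pointwise estimate on the pullback, followed by a single application of Hölder's inequality and the change of variables formula. First I would establish the pointwise inequality $|(f^{*}\omega)_{x}|\leq\sigma_{k}(f,x)\,|\omega_{f(x)}|$ for almost every $x\in M$. This holds because the pullback acts fiberwise as the transpose of $\Lambda^{k}df_{x}$, and a linear map and its transpose share the same operator norm; combined with the right-hand inequality in \eqref{eq:l1}, namely $\Vert\Lambda^{k}df_{x}\Vert\leq\sigma_{k}(f,x)$, this gives the bound. Raising to the power $p$ and integrating over $M$ yields $\Vert f^{*}\omega\Vert_{L^{p}}^{p}\leq\int_{M}(\sigma_{k}(f,x))^{p}\,|\omega_{f(x)}|^{p}\,dx$.

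Next I would split the integrand as the product $\bigl[(\sigma_{k})^{p}J_{f}^{-p/\tilde{p}}\bigr]\cdot\bigl[|\omega_{f(x)}|^{p}J_{f}^{p/\tilde{p}}\bigr]$ and apply Hölder's inequality with conjugate exponents $\alpha=\tilde{p}/(\tilde{p}-p)$ and $\beta=\tilde{p}/p$. The split is arranged so that, in the factor carrying $\omega$, the change of variables $y=f(x)$ (with $dx=J_{f}^{-1}\,dy$, valid since $f$ is a diffeomorphism) turns $\int_{M}|\omega_{f(x)}|^{p\beta}J_{f}^{(p/\tilde{p})\beta}\,dx=\int_{M}|\omega_{f(x)}|^{\tilde{p}}J_{f}\,dx$ into exactly $\int_{\tilde{M}}|\omega_{y}|^{\tilde{p}}\,dy=\Vert\omega\Vert_{L^{\tilde{p}}}^{\tilde{p}}$. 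The complementary factor becomes $\int_{M}\bigl((\sigma_{k})^{\tilde{p}}J_{f}^{-1}\bigr)^{t}\,dx$ with $t=p/(\tilde{p}-p)$, which is precisely $K_{\tilde{p},t,k}(f)^{t}$. Tracking the outer exponents $1/\alpha$ and $1/\beta$ (note $t/\alpha=p/\tilde{p}$ and $\tilde{p}/\beta=p$) then gives $\Vert f^{*}\omega\Vert_{L^{p}}^{p}\leq K_{\tilde{p},t,k}(f)^{p/\tilde{p}}\,\Vert\omega\Vert_{L^{\tilde{p}}}^{p}$, and taking $p$-th roots produces the stated inequality. The final assertion about boundedness is then immediate once $K_{\tilde{p},t,k}(f)<\infty$, i.e. once $f\in\BD^{k}_{(\tilde{p},t)}(M,\tilde{M})$.

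The care points are as follows. First, one must verify the pointwise norm bound for the pullback on $k$-forms, that is, identify the relevant fiberwise operator as $(\Lambda^{k}df_{x})^{\#}$ and invoke \eqref{eq:l1}. Second, the bookkeeping of the Hölder exponents must be done so that the two factors land \emph{exactly} on $K_{\tilde{p},t,k}(f)$ and on $\Vert\omega\Vert_{L^{\tilde{p}}}$; this is where the specific value $t=p/(\tilde{p}-p)$ is forced, and I expect this to be the only genuinely delicate step. Finally the borderline case $p=\tilde{p}$ (where $t=\infty$) should be treated separately: there Hölder degenerates, and one instead applies the change of variables directly together with the $L^{\infty}$ bound $K_{\tilde{p},\infty,k}(f)=\operatorname{ess\,sup}_{M}(\sigma_{k})^{\tilde{p}}J_{f}^{-1}$, recovering the same conclusion since then $1/\tilde{p}=1/p$.
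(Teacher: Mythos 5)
Your proposal is correct and follows essentially the same route as the paper's own proof: the pointwise bound $|(f^{*}\omega)_{x}|\leq\sigma_{k}(f,x)\,|\omega_{f(x)}|$, the splitting of the integrand into $\bigl(\sigma_{k}J_{f}^{-1/\tilde{p}}\bigr)^{p}\cdot\bigl(|\omega_{f(x)}|J_{f}^{1/\tilde{p}}\bigr)^{p}$, Hölder with exponents $\tilde{p}/(\tilde{p}-p)$ and $\tilde{p}/p$, and the change of variables — with identical exponent bookkeeping. Your two additions (justifying the pointwise bound via the transpose of $\Lambda^{k}df_{x}$ and inequality \eqref{eq:l1}, and treating the degenerate case $p=\tilde{p}$ separately) are details the paper leaves implicit, and both are sound.
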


\begin{proof}
Without loss of generality we can suppose that $J_f(x)>0$.
Using the fact that $\left| (f^{*}\omega)_{x}\right|
\leq   \sigma_{k}(f,x) \cdot  \left| \omega_{f(x)}\right|$, we have 
\begin{align*} 
\left\Vert  f^{*}\omega\right\Vert _{L^{p}(M,\Lambda^{k})}^{p} &=
\int_{M}\left| (f^{*}\omega)_{x}\right|^{p}dx
\leq \int_{M}\left(\sigma_{k}( f,x)\right)^{p}\left|\omega_{f(x)}\right|^{p}dx
   \\     &  \leq
\int_{M}\left\{\left(\sigma_{k}( f,x)\, J_{ f}^{-1/\tilde{p}}(x)\right)^{p} \cdot \left(\left|\omega_{f(x)}\right|J_{ f}^{1/\tilde{p}}(x)\right)^{p}\right\}dx.
\end{align*}
Using H\"older's inequality for $s=\frac{\tilde{p}}{\tilde{p}-p}$ and
$s'=\frac{\tilde{p}}{p}$ (so that $\frac{1}{s}+\frac{1}{s'}=1$), and the change of variable formula, we obtain
\begin{align*}
\left\Vert  f^{*}\omega\right\Vert _{L^{p}(M,\Lambda^{k})}^{p}
 & \leq
\left(\int_{M}\left(\sigma_{k}^{\tilde{p}}( f,x)J_{ f}^{-1}(x)\right)^{\frac{p}{\tilde{p}-p}}dx\right)^{\frac{\tilde{p}-p}{\tilde{p}}} \cdot
\left(\int_{M}\left(\left|\omega_{f(x)}\right|^{\tilde{p}} J_{f}(x)\right)dx\right)^{\frac{p}{\tilde{p}}}
     \\  &  \leq
\left(K_{\tilde{p},t,k}( f)\right)^{\frac{p}{\tilde{p}}}\left(\int_{\widetilde{M}}\left|\omega_{y} \right|^{\tilde{p}}dy\right)^{\frac{p}{\tilde{p}}},
\end{align*}
that is 
\[
\left\Vert  f^{*}\omega\right\Vert _{L^{p}(M,\Lambda^{k})}  
\leq
\left(K_{\tilde{p},t,k}( f)\right)^{1/\tilde{p}}\left\Vert \omega\right\Vert _{L^{\tilde{p}}(\tilde{M},\Lambda^{k})}.
\]
\end{proof}

\bigskip

\textbf{Remark} Every diffeomorphism belongs to the class $BD_{1,\infty}^n$, i.e. $BD_{1,\infty}^n(M,\tilde{M})=  \text{Diff}(M,\tilde{M})$.The previous proposition states in particular the well known fact that the condition for an $n$-form to be integrable is invariant under diffeomorphism and therefore independant of the choice of a Riemannian metric.

\bigskip 

The next proposition describes the inverse of diffeomorphisms in $BD_{s,t}^k$.

\begin{proposition}\label{prop.BDMinv}
Let   $f:M\rightarrow\tilde{M}$ be a diffeomorphism,
$0\leq m \leq n$. Let  $1 \leq  \alpha  <\infty$ and $0 <\beta  \leq \infty$ with  $ \beta (\alpha-1) > 1$. 
Then the equivalence 
$$
  f^{-1} \in \BD_{(\alpha, \beta)}^m(\tilde{M},M) 
  \quad  \Leftrightarrow \quad
    f \in \BD_{(s,t)}^{n-m}(M, \tilde{M}) 
$$ 
holds if and only if   
\begin{equation}\label{eq.BDMinv}
 s= \frac{\alpha}{\alpha-1-\frac{1}{\beta}}  \qquad   \text{and}  \qquad
  t = \beta (\alpha-1) - 1.
\end{equation}
\end{proposition}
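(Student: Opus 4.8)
The plan is to reduce both membership conditions to the finiteness of one and the same integral over $M$, and then read off the relations between the exponents by comparison. The two ingredients are the pointwise identities between the distortion data of $f$ and of $f^{-1}$, furnished by Lemma~\ref{lem:lin1} applied to the invertible map $A=df_x$, and the change of variables formula $dy=J_f(x)\,dx$ for $y=f(x)$. Assuming as usual $J_f>0$, and using $d(f^{-1})_{f(x)}=(df_x)^{-1}$, Lemma~\ref{lem:lin1} gives
\[
 \sigma_{m}(f^{-1},f(x))=\frac{\sigma_{n-m}(f,x)}{J_f(x)}
 \qquad\text{and}\qquad
 J_{f^{-1}}(f(x))=\frac{1}{J_f(x)},
\]
the second being the case $m=n$, i.e. $\sigma_n(A^{-1})=\sigma_0(A)/J_A=1/J_A$.

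Substituting these into the defining integral of $f^{-1}\in\BD^m_{(\alpha,\beta)}(\tilde M,M)$ and changing variables $y=f(x)$, that condition becomes the finiteness of
\[
 I_1:=\int_{\tilde M}\big(\sigma_m(f^{-1},y)^{\alpha}\,J_{f^{-1}}(y)^{-1}\big)^{\beta}\,dy
 =\int_M\left(\frac{\sigma_{n-m}(f,x)^{\alpha}}{J_f(x)^{\alpha-1}}\right)^{\beta}J_f(x)\,dx
 =\int_M\frac{\sigma_{n-m}(f,x)^{\alpha\beta}}{J_f(x)^{\beta(\alpha-1)-1}}\,dx .
\]
On the other hand, $f\in\BD^{n-m}_{(s,t)}(M,\tilde M)$ is by Definition~\ref{def.BMD} exactly the finiteness of
\[
 I_2:=\int_M\left(\frac{\sigma_{n-m}(f,x)^{s}}{J_f(x)}\right)^{t}dx
 =\int_M\frac{\sigma_{n-m}(f,x)^{st}}{J_f(x)^{t}}\,dx .
\]
The integrands of $I_1$ and $I_2$ agree for every diffeomorphism precisely when $st=\alpha\beta$ and $t=\beta(\alpha-1)-1$; solving yields $t=\beta(\alpha-1)-1$ and $s=\alpha\beta/t=\alpha/(\alpha-1-\tfrac1\beta)$, which is~(\ref{eq.BDMinv}). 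The hypothesis $\beta(\alpha-1)>1$ guarantees $t>0$ and that the denominator $\alpha-1-\tfrac1\beta=t/\beta$ is positive, so $s$ is well defined and $s\geq1$; thus the exponents are admissible in Definition~\ref{def.BMD}. This proves the ``if'' direction, since with these values $I_1$ and $I_2$ are literally the same integral. (When $\beta=\infty$ the two conditions become essential suprema that differ only by the positive power $1/(\alpha-1)$, and $L^\infty$-boundedness is preserved under such a power, so the equivalence again holds, now with $t=\infty$ and $s=\alpha/(\alpha-1)$.)

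For the converse, I would argue that if the exponents fail to satisfy~(\ref{eq.BDMinv}) then the equivalence fails: one must exhibit a diffeomorphism lying in exactly one of the two classes. The point is that the two functionals are governed by the two quantities $\sigma_{n-m}(f,x)$ and $J_f(x)$, which at each point can be prescribed almost independently by choosing the singular values of $df_x$ — the only constraint being Maclaurin's inequality $\sigma_{n-m}\geq\binom{n}{n-m}J_f^{(n-m)/n}$. Hence if $(st,t)\neq(\alpha\beta,\beta(\alpha-1)-1)$ one can arrange the profiles of $\sigma_{n-m}$ and $J_f$ so that exactly one of $I_1,I_2$ diverges. I expect this necessity direction to be the main obstacle: whereas the ``if'' part is a direct change of variables, realizing a prescribed pointwise distortion by a genuine global diffeomorphism (rather than merely a field of linear maps) requires some care, and is the only place where more than routine computation is needed.
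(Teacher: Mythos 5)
Your proof is correct and is essentially the paper's own argument: the same use of Lemma~\ref{lem:lin1} for $\sigma_m(f^{-1},f(x))=\sigma_{n-m}(f,x)/J_f(x)$ combined with the change of variables $dy=J_f(x)\,dx$ to identify the two defining integrals, and the same power-comparison $\left(\sigma_m(f^{-1})\right)^{\alpha}J_{f^{-1}}^{-1}=\left\{\sigma_{n-m}^{s}(f)\,J_f^{-1}\right\}^{\alpha-1}$ in the case $\beta=\infty$. The ``only if'' direction that you flag as the main obstacle is in fact not proven in the paper either --- its proof consists exactly of your computation --- so on that point your attempt is, if anything, more candid than the original.
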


\medskip

\begin{proof} Without loss of generality we can suppose that $J(f,x)>0$.

 Assume first  that $\beta < \infty$, then the condition  $f^{-1} \in \BD_{(\alpha, \beta)}^m(\tilde{M},M)$  means that 
\[
\int_{\widetilde{M}}\left\{\sigma_{m}^{\alpha}( f^{-1},y)\, 
J_{ f^{-1}}^{-1}(y)\right\}^{\beta}dy<\infty.
\]
By the lemma \ref{lem:lin1},  we have 
\begin{equation}\label{eq:d2a}
 \sigma_{m}( f^{-1}, f(x))=\frac{\sigma_{n-m}( f,x)}{J_{ f}(x)}
\end{equation}
at $y= f(x)$ and for any $0\leq m\leq n$. 
Using  the relations  (\ref{eq.BDMinv}), which can be rewritten as 
\begin{equation*}\label{}
 \alpha \beta = s t = t+\beta +1,
\end{equation*}
together with  the change of variable formula with the 
standard relations $dy=J_{ f}(x)dx$, $J_{ f^{-1}}( f(x))=J_{ f}^{-1}(x)$, we can rewrite the latter integral as 
\begin{align*}
\int_{M}\left\{\left(\frac{\sigma_{n-m}( f,x)}{J_{ f}(x)}\right)^{\alpha}\, J_{ f}(x)\right\}^{\beta}J_{ f}(x)\, dx 
& =
\int_{M} \left(\sigma_{n-m}(f,x)\right)^{\alpha\beta}\,
\left(J_{f}(x)\right)^{1+\beta-\alpha\beta}  \, dx.
   \\
    &=   \int_{M}\left\{ \left(\sigma_{n-m}(f,x)\right)^{s}\,
\left(J_{f}(x)\right)^{-1} \right\}^{t} \, dx.
\end{align*}
This integral is finite if and only if $f \in \BD_{(s,t)}^{n-m}(M, \tilde{M})$.

\medskip

Assume now that $\beta = \infty$, then we also have $t= \infty$.
The condition  $f^{-1} \in \BD_{(\alpha,  \infty)}^m(\tilde{M},M)$  means in that case that 
\begin{equation}\label{condalbr}
 \sigma_{m}^{\alpha}( f^{-1})\, J_{ f^{-1}}^{-1}\quad \text{is uniformly bounded.}
\end{equation}
Using the relation  $s = \frac{\alpha}{\alpha-1}$, the equation (\ref{eq:d2a})  and  $J_{f^{-1}} = J_{ f}^{-1}$, we have 
\begin{align*}
 \left({\sigma_{m}(f^{-1})}\right)^{\alpha} J_{f^{-1}}^{-1}
 & = \left({\sigma_{m}(f^{-1})}\right)^{\alpha} J_f 
=
\left({\sigma_{m}(f^{-1})}{J_{ f}} \right)^{\alpha} J_f^{1-\alpha}
=
 \left\{ \sigma_{n-m}^{s}(f)\, J_{f}^{-1}   \right\}^{\alpha-1}
\end{align*}
Thus (\ref{condalbr}) holds if and only if  $\sigma_{n-m}^{s}(f)\, J_{f}^{-1}$ is bounded, i.e. 
$f \in \BD_{(s,t)}^{n-m}(M, \tilde{M})$.

\end{proof}

\bigskip

\begin{corollary}\label{cor:CrBMD2} 
If   $\tilde{q}\leq q$ and the  diffeomorphism $f$ belongs to  $\BD_{(s,t)}^{n-m}(M, \tilde{M})$
with
\begin{equation*}\label{}
 s = \frac{\tilde{q}}{\tilde{q}-1}, \qquad
 t =  \frac{q(\tilde{q}-1)}{q-\tilde{q}},
\end{equation*}
 then the operator 
\[
 f_{*}:L^{q}(M,\Lambda^{m})\rightarrow L^{\tilde{q}}(\tilde{M},\Lambda^{m})
\]
is bounded.
\end{corollary}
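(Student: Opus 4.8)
The plan is to express the operator $f_{*}=(f^{-1})^{*}$ as the pullback under the inverse diffeomorphism $f^{-1}:\tilde{M}\rightarrow M$, and then to apply Proposition \ref{pro:prBMD1} to $f^{-1}$. The only preparatory work is to translate the assumed distortion hypothesis on $f$ into the distortion hypothesis on $f^{-1}$ that Proposition \ref{pro:prBMD1} requires, and this translation is exactly what Proposition \ref{prop.BDMinv} provides.

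First I would apply Proposition \ref{pro:prBMD1} to the diffeomorphism $g:=f^{-1}:\tilde{M}\rightarrow M$. In the notation of that proposition the source is $\tilde{M}$, the target is $M$, and $g^{*}=f_{*}$ sends $L^{\tilde{p}}(M,\Lambda^{k})$ to $L^{p}(\tilde{M},\Lambda^{k})$. Matching this with the desired operator $f_{*}:L^{q}(M,\Lambda^{m})\rightarrow L^{\tilde{q}}(\tilde{M},\Lambda^{m})$ forces $k=m$, $\tilde{p}=q$ and $p=\tilde{q}$. The hypothesis $p\leq\tilde{p}$ of the proposition then reads exactly $\tilde{q}\leq q$, and its conclusion is precisely the boundedness we seek, provided we can verify that $f^{-1}\in\BD^{m}_{(q,\tau)}(\tilde{M},M)$ with $\tau=\frac{p}{\tilde{p}-p}=\frac{\tilde{q}}{q-\tilde{q}}$.

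Second, I would produce this distortion property of $f^{-1}$ from the assumed property of $f$ by invoking Proposition \ref{prop.BDMinv} with $\alpha=q$ and $\beta=\tau=\frac{\tilde{q}}{q-\tilde{q}}$. The content of this step is purely algebraic: one substitutes these values into $s=\frac{\alpha}{\alpha-1-1/\beta}$ and $t=\beta(\alpha-1)-1$ and checks that they collapse to the values $s=\frac{\tilde{q}}{\tilde{q}-1}$ and $t=\frac{q(\tilde{q}-1)}{q-\tilde{q}}$ specified in the corollary. The equivalence in Proposition \ref{prop.BDMinv} then reads $f^{-1}\in\BD^{m}_{(q,\tau)}(\tilde{M},M)\Leftrightarrow f\in\BD^{n-m}_{(s,t)}(M,\tilde{M})$, so the assumed membership of $f$ yields the needed membership of $f^{-1}$, and combining with the previous paragraph finishes the argument.

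The individual steps are routine; the main thing to get right is the bookkeeping, namely tracking which manifold and which integrability exponent plays which role once both propositions are read for $f^{-1}$ instead of $f$, and checking the side conditions $1\leq\alpha<\infty$, $0<\beta\leq\infty$, $\beta(\alpha-1)>1$ of Proposition \ref{prop.BDMinv}. The last inequality reduces to $\tilde{q}>1$, which is implicit in the hypotheses, since otherwise $s=\frac{\tilde{q}}{\tilde{q}-1}$ would not be an admissible exponent in the sense of Definition \ref{def.BMD}. I would also flag the boundary case $\tilde{q}=q$, which corresponds to $\beta=\tau=\infty$ and $t=\infty$: it is covered by the $\beta=\infty$ branch of Proposition \ref{prop.BDMinv} together with the $p=\tilde{p}$ case of Proposition \ref{pro:prBMD1}, so that the stated hypothesis $\tilde{q}\leq q$ is genuinely handled and not merely $\tilde{q}<q$.
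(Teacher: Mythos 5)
Your proof is correct and follows essentially the same route as the paper: the paper's own (one-line) proof is exactly the combination of Proposition \ref{pro:prBMD1} applied to $f^{-1}$ and Proposition \ref{prop.BDMinv} with $\alpha=q$ and $\beta=\frac{\tilde{q}}{q-\tilde{q}}$, which is what you carry out. Your version merely makes explicit the bookkeeping, the side condition $\beta(\alpha-1)>1\Leftrightarrow\tilde{q}>1$, and the boundary case $\tilde{q}=q$, all of which check out.
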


\begin{proof}
This follows immediately from  Proposition \ref{pro:prBMD1} and the previous proposition 
with $\alpha = q$ and $\beta =  \frac{\tilde{q}}{q-\tilde{q}}$.

\end{proof}

\bigskip

\begin{corollary}\label{cor:CrBMD3} 
If the diffeomorphism $f:M \to \tilde {M}$ satisfies $f\in  BD_{(q,\infty)}^{k}(M, \tilde{M}) \cap \BD_{(q',\infty)}^{n-k}(M, \tilde{M})$
with $q'=\frac{q}{q-1}$
then $f^{*}:L^{q}(\tilde{M},\Lambda^{k})\rightarrow L^{q}(M,\Lambda^{k})$ is an isomorphism 
\end{corollary}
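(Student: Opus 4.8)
The plan is to show directly that both the pull-back $f^{*}$ and its inverse are bounded on the relevant spaces of $q$-integrable $k$-forms; once this is done the conclusion is immediate. Recall that pull-back is contravariant, so that $(f^{-1})^{*}\circ f^{*}=(f\circ f^{-1})^{*}=\mathrm{id}$ and $f^{*}\circ(f^{-1})^{*}=(f^{-1}\circ f)^{*}=\mathrm{id}$ pointwise on forms; hence $f_{*}:=(f^{-1})^{*}$ is the algebraic inverse of $f^{*}$. Consequently it suffices to prove that $f^{*}:L^{q}(\tilde{M},\Lambda^{k})\to L^{q}(M,\Lambda^{k})$ and $f_{*}:L^{q}(M,\Lambda^{k})\to L^{q}(\tilde{M},\Lambda^{k})$ are both bounded: they are then mutually inverse bounded operators, so $f^{*}$ is an isomorphism of Banach spaces. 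Throughout I assume $1<q<\infty$, which is forced by the appearance of $q'=\frac{q}{q-1}$ and by the finiteness requirement in Proposition \ref{pro:prBMD1}.

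For the boundedness of $f^{*}$ I would apply Proposition \ref{pro:prBMD1} in the diagonal case $p=\tilde{p}=q$. With these exponents the relation $t=\frac{p}{\tilde{p}-p}$ degenerates to $t=\infty$, so the hypothesis $f\in \BD^{k}_{(\tilde{p},t)}(M,\tilde{M})$ reads exactly $f\in \BD^{k}_{(q,\infty)}(M,\tilde{M})$, which is the first assumption of the corollary. The proposition then yields that $f^{*}:L^{q}(\tilde{M},\Lambda^{k})\to L^{q}(M,\Lambda^{k})$ is bounded.

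For the boundedness of $f_{*}$ I would invoke Corollary \ref{cor:CrBMD2} in the boundary case $\tilde{q}=q$ with $m=k$. There the exponents become $s=\frac{\tilde{q}}{\tilde{q}-1}=q'$ and $t=\frac{q(\tilde{q}-1)}{q-\tilde{q}}=\infty$, so the required membership $f\in \BD^{n-m}_{(s,t)}(M,\tilde{M})$ is precisely $f\in \BD^{n-k}_{(q',\infty)}(M,\tilde{M})$, the second assumption of the corollary. Hence $f_{*}:L^{q}(M,\Lambda^{k})\to L^{q}(\tilde{M},\Lambda^{k})$ is bounded, and combining with the previous paragraph finishes the proof.

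The only delicate point is that both invocations sit at the degenerate endpoints $t=\infty$ (equivalently $\beta=\infty$ in Proposition \ref{prop.BDMinv}, which underlies Corollary \ref{cor:CrBMD2}), where the Hölder exponent $s=\frac{\tilde{p}}{\tilde{p}-p}$ collapses to $\infty$. I would therefore double-check that the argument of Proposition \ref{pro:prBMD1} remains valid when $p=\tilde{p}$: in that case the pointwise estimate $|(f^{*}\omega)_{x}|\le \sigma_{k}(f,x)\,|\omega_{f(x)}|$, together with the uniform bound $\sigma_{k}(f,x)^{q}\le C\,J_{f}(x)$ coming from $f\in \BD^{k}_{(q,\infty)}(M,\tilde{M})$ and the change of variables $dy=J_{f}(x)\,dx$, gives the desired inequality directly, with no genuine use of Hölder's inequality. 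The same remark applies to $f_{*}$ through Corollary \ref{cor:CrBMD2}, so these endpoint cases cause no real difficulty.
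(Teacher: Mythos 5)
Your proof is correct and follows essentially the same route as the paper: the paper deduces the corollary ``at once'' from Propositions \ref{prop.BDMinv} and \ref{pro:prBMD1}, and your use of Corollary \ref{cor:CrBMD2} at the endpoint $\tilde{q}=q$ is exactly that same combination packaged one step further, since Corollary \ref{cor:CrBMD2} is itself proved by applying those two propositions. Your explicit check of the degenerate endpoint $p=\tilde{p}$ (where H\"older becomes the $L^{\infty}$--$L^{1}$ pairing) is a welcome point of care that the paper leaves implicit.
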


\textbf{Proof}  It follows at once from the  Propositions \ref{prop.BDMinv} and \ref{pro:prBMD1}.

\qed

\section{Relation with quasiconformal and bilipschitz diffeomorphisms.}

Recall that an orientation preserving diffeomorphism\footnote{It is usual, and important, to consider not only diffeomorphisms, but more generally homeomorphisms in $W^{1,n}_{loc}$ when defining quasiconformal maps. In our present context, diffeomorphisms are sufficient, see however the discussion in section 8} $f : (M,g) \to (\tilde{M},\tilde{g})$,  between two oriented $n$-dimensional Riemannian manifolds is said to be \emph{quasiconformal} if
$$
 \frac{|df|^n}{J_f} \in L^{\infty}(M).
$$

\bigskip

\begin{lemma}\label{lem.qc}
 For the diffeomorphism $f : (M,g) \to (\tilde{M},\tilde{g})$, the following properties are equivalent
 \begin{enumerate}[(i.)]
  \item $f$ is quasiconformal;
  \item $f^{-1}$ is quasi-conformal;
  \item If  \  $\lambda_1(x),\lambda_2(x), \cdots , \lambda_n(x)$ are  the principal dilation coefficients
of $df_x$, then 
 $$
   \sup_{x\in M} \frac{\max \{\lambda_1(x),\lambda_2(x), \cdots , \lambda_n(x)\} }
   {\min \{\lambda_1(x),\lambda_2(x), \cdots , \lambda_n(x)\} }  <  \infty.
  $$
\end{enumerate}
\end{lemma}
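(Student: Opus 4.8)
The plan is to reduce everything to a single pointwise algebraic comparison between the quasiconformal quantity $|df_x|^n / J_f(x)$ and the dilation ratio appearing in (iii), and then to exploit the fact that (iii) is manifestly invariant under $f \mapsto f^{-1}$. First I would rewrite the defining quantities in terms of the principal dilation coefficients. Since the operator norm satisfies $|df_x| = \max_i \lambda_i(x)$ and the Jacobian is $J_f(x) = \sigma_n(df_x) = \lambda_1(x)\cdots\lambda_n(x)$, the quasiconformality condition becomes, writing $\lambda_{\max} = \max_i \lambda_i$ and $\lambda_{\min} = \min_i \lambda_i$,
\[
\frac{|df_x|^n}{J_f(x)} = \prod_{i=1}^n \frac{\lambda_{\max}(x)}{\lambda_i(x)}.
\]

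Next, for the equivalence (i) $\Leftrightarrow$ (iii), I would establish the pointwise two-sided bound
\[
\frac{\lambda_{\max}}{\lambda_{\min}} \ \leq\ \prod_{i=1}^n \frac{\lambda_{\max}}{\lambda_i} \ \leq\ \left(\frac{\lambda_{\max}}{\lambda_{\min}}\right)^{n-1}.
\]
The lower bound holds because every factor is $\geq 1$ while one of them equals $\lambda_{\max}/\lambda_{\min}$; the upper bound holds because the factor with $\lambda_i = \lambda_{\max}$ equals $1$ and each of the remaining $n-1$ factors is $\leq \lambda_{\max}/\lambda_{\min}$. Taking the supremum over $x \in M$ then shows that $|df|^n / J_f \in L^\infty(M)$ if and only if the ratio in (iii) is finite, which is exactly (i) $\Leftrightarrow$ (iii).

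Finally, to fold in $f^{-1}$, I would invoke the fact recalled earlier that the principal dilation coefficients of $(df_x)^{-1} = df^{-1}_{f(x)}$ are the reciprocals $1/\lambda_1(x), \ldots, 1/\lambda_n(x)$. Hence at the point $y = f(x)$ the dilation ratio in (iii) for $f^{-1}$ is
\[
\frac{\max_i 1/\lambda_i(x)}{\min_i 1/\lambda_i(x)} = \frac{\lambda_{\max}(x)}{\lambda_{\min}(x)},
\]
which is precisely the ratio in (iii) for $f$ at $x$. Since $f$ is a diffeomorphism, $\sup_{y \in \tilde{M}}$ equals $\sup_{x \in M}$ under $y = f(x)$, so condition (iii) is literally the same statement for $f$ and for $f^{-1}$. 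Combining this symmetry with the already-established equivalence (i) $\Leftrightarrow$ (iii), applied to $f^{-1}$ as well, closes the cycle (i) $\Leftrightarrow$ (iii) $\Leftrightarrow$ (ii).

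I do not anticipate a genuine obstacle here: once the operator norm and the Jacobian are expressed through the $\lambda_i$, the whole argument is pointwise and elementary. The only point requiring a little care is the bookkeeping verifying that (iii) is invariant under passage to $f^{-1}$ — this is what makes the equivalence with (ii) essentially automatic.
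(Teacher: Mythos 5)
Your proof is correct and complete. The paper gives no argument for this lemma at all---it only remarks that ``the proof of this lemma is standard and easy''---and your pointwise two-sided bound $\lambda_{\max}/\lambda_{\min} \le |df_x|^n/J_f(x) \le \left(\lambda_{\max}/\lambda_{\min}\right)^{n-1}$, combined with the observation that the singular values of $df^{-1}_{f(x)}$ are the reciprocals $1/\lambda_i(x)$ (so condition (iii) is invariant under $f \mapsto f^{-1}$), is precisely the standard elementary argument the authors are alluding to.
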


The proof of this lemma is standard and easy.

\bigskip                                                                                      

Let us denote by $\QC(M,\tilde{M})$ the class of all quasiconformal diffeomorphisms, it is clear that $\QC(M,\tilde{M}) = \BD^1_{n,\infty}(M,\tilde{M})$, but, more generally:

\begin{proposition} We have
$$\QC(M,\tilde{M}) = \BD^k_{\frac{n}{k},\infty}(M,\tilde{M})$$
for any $1\leq k \leq n-1$.
\end{proposition}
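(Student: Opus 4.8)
The statement is pointwise in nature, so the plan is to reduce it to an inequality among the singular values $\lambda_1(x)\le\cdots\le\lambda_n(x)$ of $df_x$ at each point and then invoke Lemma~\ref{lem.qc}. Recall that $f\in\BD^k_{n/k,\infty}$ means exactly that $(\sigma_k(f,x))^{n/k}/J_f(x)$ is essentially bounded on $M$, while by Lemma~\ref{lem.qc} the map $f$ is quasiconformal iff $\lambda_n(x)/\lambda_1(x)$ is essentially bounded. Both quantities $(\sigma_k)^{n/k}/J_f$ and $\lambda_n/\lambda_1$ are homogeneous of degree $0$ in $(\lambda_1,\dots,\lambda_n)$ (the first because the numerator has degree $k\cdot\frac nk=n$ and $J_f=\sigma_n$ has degree $n$), so at each fixed $x$ I may rescale the $\lambda_i$ to assume $J_f(x)=\lambda_1\cdots\lambda_n=1$ without changing either quantity. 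Thus it suffices to prove, for nonnegative reals with $\prod_i\lambda_i=1$, that $(\sigma_k)^{n/k}$ is bounded above by a constant if and only if $\lambda_n/\lambda_1$ is.

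For the inclusion $\QC\subset\BD^k_{n/k,\infty}$ I would use the elementary bound obtained by majorizing every monomial in the definition of $\sigma_k$ by $\lambda_n^k=|df|^k$: since there are $\binom nk$ terms,
\[
\sigma_k(f,x)\le\binom nk\,\lambda_n(x)^k=\binom nk\,|df_x|^k,
\]
whence $(\sigma_k)^{n/k}/J_f\le\binom nk^{\,n/k}\,|df|^n/J_f$. This is bounded as soon as $f$ is quasiconformal, giving the first inclusion with no difficulty.

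The reverse inclusion $\BD^k_{n/k,\infty}\subset\QC$ is where the work lies. I would start from the opposite elementary bound: the monomial $\lambda_{n-k+1}\cdots\lambda_n$ (the product $P$ of the $k$ largest coefficients) is one of the nonnegative terms of $\sigma_k$, so $\sigma_k\ge P$, and hence the hypothesis forces $P$ to be bounded, say $P\le C$, after normalizing $J_f=1$. From $P\ge\lambda_{n-k+1}^{\,k}$ I get $\lambda_{n-k+1}\le C^{1/k}$, so all the ``small'' coefficients $\lambda_1,\dots,\lambda_{n-k+1}$ are bounded above by $C^{1/k}$; combining this with $\lambda_1\cdots\lambda_{n-k}=1/P\ge 1/C$ and dividing out the factors $\lambda_2,\dots,\lambda_{n-k}$, each $\le C^{1/k}$, yields a lower bound $\lambda_1\ge C^{-(n-1)/k}=:c>0$ (this step uses $k\le n-1$, so that the product $\lambda_1\cdots\lambda_{n-k}$ is nonempty and actually contains $\lambda_1$). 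Once the smallest coefficient is bounded below, the normalization $\prod_i\lambda_i=1$ immediately bounds the largest one above, $\lambda_n=1/(\lambda_1\cdots\lambda_{n-1})\le c^{-(n-1)}$, and therefore $\lambda_n/\lambda_1\le c^{-n}$. By Lemma~\ref{lem.qc} this means $f$ is quasiconformal.

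The one genuinely delicate point, and the place I expect to be the main obstacle, is the reverse direction's passage from ``the product $P$ of the $k$ largest coefficients is bounded'' to a bound on the full distortion $\lambda_n/\lambda_1$: a naive attempt to bound $\lambda_n$ directly from $P$ fails, since a single large $\lambda_n$ can be hidden by small companion factors, and the correct route is instead to bound the smallest coefficient $\lambda_1$ from below and then exploit $\prod_i\lambda_i=1$. This is exactly the step that breaks down when $k=n$ (where $P=J_f$ carries no information), which is why the hypothesis $k\le n-1$ is essential.
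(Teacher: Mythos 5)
Your proof is correct, but the harder inclusion $\BD^k_{\frac{n}{k},\infty}\subset\QC$ is handled by a genuinely different argument than the paper's. The paper splits this direction into three cases: for $k=\frac n2$ and for $k<\frac n2$ it establishes the sharp pointwise inequality $\frac{\lambda_n}{\lambda_1}\le \frac{(\sigma_k(f))^{n/k}}{J_f}$ by a chain of monomial estimates (the key step being $\lambda_{k+1}\cdots\lambda_{n-k}\le(\lambda_{n-k+1}\cdots\lambda_n)^{(n-2k)/k}$), and for $\frac n2<k<n$ it passes to the inverse map, using Proposition \ref{prop.BDMinv} to get $f^{-1}\in\BD^{n-k}_{\frac{n}{n-k},\infty}(\tilde M,M)$ and then Lemma \ref{lem.qc} to transfer quasiconformality back to $f$. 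Your route instead exploits the degree-$0$ homogeneity of both quantities to normalize $J_f=1$ pointwise, and then runs a single argument valid for all $1\le k\le n-1$: bound $\lambda_{n-k+1}$ above via $\lambda_{n-k+1}^k\le P\le\sigma_k$, deduce a lower bound on $\lambda_1$ by dividing the identity $\lambda_1\cdots\lambda_{n-k}=1/P$ by the bounded middle factors, and then bound $\lambda_n$ above from the normalization; this correctly isolates $k\le n-1$ as the essential hypothesis. What your approach buys is uniformity and economy: no case distinction, no reliance on the inverse-map proposition, and an explicit quantitative bound ($\lambda_n/\lambda_1\le K^{n-1}$ where $K$ is the essential supremum of $(\sigma_k)^{n/k}/J_f$). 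What the paper's approach buys is the stronger constant-$1$ pointwise inequality in the range $k\le\frac n2$ and an illustration of the $k\leftrightarrow n-k$ duality through $f^{-1}$, which is machinery the paper needs elsewhere anyway. (Your treatment of the easy inclusion, majorizing every monomial by $\lambda_n^k$ and invoking the definition $|df|^n/J_f\in L^\infty$, is a cosmetic variant of the paper's estimate $\sigma_k\le C\lambda_1^k$ followed by $\lambda_1^n\le J_f$.)
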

 
\medskip
 
\textbf{Proof}  Suppose that  $f : (M,g) \to (\tilde{M},\tilde{g})$ is quasiconformal. Let us assume that  
$\lambda_1 \leq \lambda_2 \leq \cdots  \leq \lambda_n$, then 
by condition (iii) of the previous lemma,
there exits a constant $C$ such that
$$\sigma_k(f,x) \leq C \cdot (\lambda_1(x))^k.$$
Since $J_f = \lambda_1 \cdot \lambda_2  \cdot \cdots  \cdot \lambda_n$, we have 
$$
  \frac{(\sigma_k(f))^{n/k}}{J_{f}}  \leq  C \cdot  \frac{(\lambda_1^k)^{n/k}}{J_{f}}
  \leq  C \cdot  \frac{(\lambda_1)^{n}}{( \lambda_1 \cdot \lambda_2  \cdot \cdots  \cdot \lambda_n)}
  \leq  C,
$$
i.e. $f \in \BD^k_{\frac{n}{k},\infty}(M,\tilde{M})$. We have thus shown that $\QC(M,\tilde{M}) \subset \BD^k_{\frac{n}{k},\infty}(M,\tilde{M})$.

\medskip

To prove the converse inclusion, we distinguish three cases : $k =  \frac{n}{2}$, \ $1\leq k < \frac{n}{2}$ and 
$\frac{n}{2} < k < n$.

Let us first assume that $k =  \frac{n}{2}$, then we have 
$$
 \frac{\lambda_{n }}{\lambda_{1}} \leq   \frac{ (\lambda_{n-k+1}  \cdot \cdots  \cdot \lambda_{n})}{ (\lambda_{1}  \cdot \cdots  \cdot \lambda_{k}) } \leq 
  \frac{ (\lambda_{n-k+1}  \cdot \cdots  \cdot \lambda_{n})^2}{ (\lambda_{1}  \cdot \cdots  \cdot \lambda_{k})(\lambda_{n-k+1}  \cdot \cdots  \cdot \lambda_{n}) } \leq 
 \frac{(\sigma_k(f))^{2}}{J_{f}}, 
$$
 which implies  that 
$\BD^{n/2}_{2,\infty}(M,\tilde{M}) \subset \QC(M,\tilde{M}).$

\medskip

Assume now that   $1\leq k < \frac{n}{2}$,  i.e.  $k+1 \leq n-k$. Observe that
$$
 (\lambda_{k+1}  \cdot \cdots  \cdot \lambda_{n-k})  \leq (\lambda_{n-k})^{n-2k} \leq 
  (\lambda_{n-k+1}  \cdot \cdots  \cdot \lambda_{n})^{(n-2k)/k},
$$
therefore
\begin{align*}
 J_f  & = (\lambda_1 \cdot \lambda_2  \cdot \cdots  \cdot \lambda_n)
    \\ &   =  (\lambda_{1}  \cdot \cdots  \cdot \lambda_{k})  (\lambda_{k+1}  \cdot \cdots  \cdot \lambda_{n-k})  (\lambda_{n-k+1}  \cdot \cdots  \cdot \lambda_{n})     
   \\ &  \leq  (\lambda_{1}  \cdot \cdots  \cdot \lambda_{k})  (\lambda_{n-k+1}  \cdot \cdots  \cdot \lambda_{n})^{\frac{n-2k}{k}+1}
    \\ & = (\lambda_{1}  \cdot \cdots  \cdot \lambda_{k})  (\lambda_{n-k+1}  \cdot \cdots  \cdot \lambda_{n})^{\frac{n}{k}-1}.
\end{align*}
Because $ \sigma_k \geq    \lambda_{n-k+1}  \cdot \cdots  \cdot \lambda_n$, we  have from the previous inequality
\begin{align*}
 \frac{(\sigma_k(f))^{n/k}}{J_{f}}  & \geq \frac{ (\lambda_{n-k+1}  \cdot \cdots  \cdot \lambda_{n})^{\frac{n}{k}}}{J_{f}}
 \geq  \frac{ (\lambda_{n-k+1}  \cdot \cdots  \cdot \lambda_{n})}{ (\lambda_{1}  \cdot \cdots  \cdot \lambda_{k}) }.
\end{align*}
Since 
$$
 \frac{\lambda_{n-k+1}}{\lambda_{k}} , \ \frac{\lambda_{n-k+2}}{\lambda_{k-1}} , \dots , \frac{\lambda_{n-1}}{\lambda_{2}} \,  \geq 1,
$$
we finally have 
$$
 \frac{\lambda_{n }}{\lambda_{1}} \leq   \frac{ (\lambda_{n}  \cdot \cdots  \cdot \lambda_{n-k+1})}{ (\lambda_{1}  \cdot \cdots  \cdot \lambda_{k}) } \leq \frac{(\sigma_k(f))^{n/k}}{J_f}, 
$$
from which follows that 
$\BD^k_{\frac{n}{k},\infty}(M,\tilde{M}) \subset \QC(M,\tilde{M}).$

\medskip

If $k > \frac{n}{2}$, then $n-k < \frac{n}{2}$ and we have from the previous argument and Proposition \ref{prop.BDMinv}
$$
 f^{-1} \in \BD^{n-k}_{\frac{n}{n-k},\infty}(\tilde{M},M)  \subset \QC(\tilde{M},M),
$$
and we deduce from  lemma \ref{lem.qc}  that $f\in \QC(M,\tilde{M})$.

\qed

\bigskip

The next result relates our class of maps to bilipschitz ones.

\begin{proposition}
If   $f\in  BD_{(q,\infty)}^{k}(M, \tilde{M}) \cap \BD_{(q',\infty)}^{n-k}(M, \tilde{M})$ with $q' = \frac{q}{q-1}$, then 
$f$ is quasiconformal. Furthermore if $q \neq \frac{n}{k}$, then $f$ is bilipschitz.
\end{proposition}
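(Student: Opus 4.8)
The plan is to reduce everything to the pointwise principal distortion coefficients $\lambda_1(x)\le\cdots\le\lambda_n(x)$ of $df_x$ and to exploit the two elementary lower bounds $\sigma_k(f)\ge \lambda_{n-k+1}\cdots\lambda_n$ and $\sigma_{n-k}(f)\ge \lambda_{k+1}\cdots\lambda_n$ (each being the largest single term of the corresponding symmetric sum), together with the conjugacy $q'=q/(q-1)$. Write $P=\lambda_{n-k+1}\cdots\lambda_n$ for the product of the $k$ largest coefficients, $Q=\lambda_1\cdots\lambda_k$ for the product of the $k$ smallest, and $R=\lambda_{k+1}\cdots\lambda_n$ for the product of the $n-k$ largest, so that $J_f=QR$. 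The hypothesis $f\in\BD^k_{(q,\infty)}$ furnishes a constant with $\sigma_k^q/J_f\le C_1$ on $M$, hence $P^q\le C_1 J_f$; the hypothesis $f\in\BD^{n-k}_{(q',\infty)}$ gives $\sigma_{n-k}^{q'}/J_f\le C_2$, hence $R^{q'}\le C_2 J_f$. Since $R=J_f/Q$, the second inequality rearranges, using $q'=q/(q-1)$ (so that $q'/(q'-1)=q$ and $1/(q'-1)=q-1$), into $J_f\le C_2^{q-1}Q^q$.

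Combining the two bounds gives $P^q\le C_1 J_f\le C_1C_2^{q-1}Q^q$, that is $P/Q\le D:=(C_1C_2^{q-1})^{1/q}$ uniformly on $M$. The decisive elementary step is then the inequality
\[
\frac{\lambda_n}{\lambda_1}\le \frac{P}{Q}=\prod_{j=1}^k\frac{\lambda_{n-k+j}}{\lambda_j},
\]
which I would prove by factoring out $\lambda_n/\lambda_1$ and pairing the leftover factors $\lambda_{n-k+1},\dots,\lambda_{n-1}$ with $\lambda_2,\dots,\lambda_k$: each remaining ratio $\lambda_{n-k+1+i}/\lambda_{2+i}$ is $\ge 1$ precisely because $k\le n-1$, so the leftover product is $\ge 1$. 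This yields $\lambda_n/\lambda_1\le D$ uniformly, and Lemma \ref{lem.qc}(iii) then shows that $f$ is quasiconformal.

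For the bilipschitz assertion I would feed quasiconformality back into the two hypotheses. Quasiconformality gives $\lambda_1\le\lambda_i\le D\lambda_1$ for every $i$, whence $\sigma_k\asymp\lambda_1^k$, $\sigma_{n-k}\asymp\lambda_1^{n-k}$ and $J_f\asymp\lambda_1^{n}$ up to dimensional constants. Then $\sigma_k^q/J_f\le C_1$ becomes $\lambda_1^{kq-n}\lesssim 1$, while $\sigma_{n-k}^{q'}/J_f\le C_2$ becomes $\lambda_1^{kq-n}\gtrsim 1$ (the exponent emerging as $-(kq-n)/(q-1)$ once one simplifies $(n-k)q'-n$). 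Thus $\lambda_1^{kq-n}$ is pinched between two positive constants; when $q\ne n/k$ the exponent $kq-n$ is nonzero, which forces $\lambda_1$, and hence every $\lambda_i$, to be bounded above and away from $0$ uniformly on $M$. This is exactly the bilipschitz condition.

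The main obstacle I anticipate is the degree-$k$ inequality $\lambda_n/\lambda_1\le P/Q$: it is tempting but false to try to bound $\lambda_n/\lambda_1$ by a single factor of $P/Q$, and one must check that the pairing argument genuinely uses $k\le n-1$ and that all the constants produced are uniform in $x$, which is guaranteed by the $L^\infty$ nature of the hypotheses. Everything else is bookkeeping with the conjugate exponents $q$ and $q'$.
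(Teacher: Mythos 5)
Your argument is correct, and its first half is essentially the paper's: the paper bounds $\lambda_n/\lambda_1$ by $\sigma_k(f)\sigma_{n-k}(f)/J_f$ and splits $J_f=J_f^{1/q}J_f^{1/q'}$ to factor this as $\bigl(\sigma_k^q/J_f\bigr)^{1/q}\bigl(\sigma_{n-k}^{q'}/J_f\bigr)^{1/q'}$, which is the same computation you perform by rearranging $P^q\le C_1J_f$ and $J_f\le C_2^{q-1}Q^q$; your explicit pairing proof of $\lambda_n/\lambda_1\le P/Q$ (valid precisely when $1\le k\le n-1$) is the step the paper leaves implicit by referring to the ratios displayed in the preceding proposition. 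Where you genuinely diverge is the bilipschitz half. The paper proves that $f$ is Lipschitz via the estimate $\lambda_n^{qk-n}\le c^{kq}\,\sigma_k(f)^q/J_f$, which forces a case distinction — the hypothesis on $\sigma_k$ is used when $qk>n$, the one on $\sigma_{n-k}$ when $qk<n$ — and then invokes Proposition \ref{prop.BDMinv} to conclude that $f^{-1}$ lies in the same class $\BD^k_{(q,\infty)}\cap\BD^{n-k}_{(q',\infty)}$ over $(\tilde{M},M)$ and is therefore Lipschitz by the same argument. You instead feed quasiconformality back in to replace all invariants by powers of $\lambda_1$, and read off the two-sided pinching $c\le\lambda_1^{kq-n}\le C$ from the two hypotheses simultaneously; since $kq\neq n$ this bounds every $\lambda_i$ above and away from zero at once, giving both Lipschitz bounds in one stroke. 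Your route avoids both the case distinction and the appeal to Proposition \ref{prop.BDMinv} (so it is more self-contained), while the paper's route showcases the inversion duality between $\BD^k_{(q,\infty)}$ and $\BD^{n-k}_{(q',\infty)}$ that structures the rest of the paper. Your exponent computation $(n-k)q'-n=-(kq-n)/(q-1)$ and the uniformity of all constants check out; note only that, like the paper, you implicitly assume $q>1$, which is forced by the requirement that $q'=q/(q-1)$ be a finite admissible exponent.
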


\bigskip

\textbf{Proof}
Using the same notations and convention as in the previous proof, we have  \begin{align*}
  \frac{\lambda_{n }}{\lambda_{1}}   
  & \leq   \frac{ (\lambda_{n}  \cdot \cdots  \cdot \lambda_{n-k+1})}{ (\lambda_{1}  \cdot \cdots  \cdot \lambda_{k}) } 
= \ 
 \frac{ (\lambda_{n-k+1}  \cdot \cdots  \cdot \lambda_{n}) (\lambda_{k+1}  \cdot \cdots  \cdot \lambda_{n})}{ (\lambda_{1}  \cdot \cdots  \cdot \lambda_{k}) (\lambda_{k+1}  \cdot \cdots  \cdot \lambda_{n})}
 \\ & \leq  \frac{\sigma_k(f)\cdot \sigma_{n-k}(f)}{J_f}
 = \left(\frac{\left(\sigma_k(f)\right)^q}{J_f}\right)^{\frac{1}{q}}  \left(\frac{\left(\sigma_{n-k}(f)\right)^{q'}}{J_f}\right)^{\frac{1}{q'}},
\end{align*}
because $\frac{1}{q}+\frac{1}{q'} = 1$. It follows from this computation that any map $f$ in 
$BD_{(q,\infty)}^{k}(M, \tilde{M}) \cap \BD_{(q',\infty)}^{n-k}(M, \tilde{M})$  is quasiconformal. 

\medskip

We now prove that $f$ is bilipschitz if $q \neq  \frac{n}{k}$:
Because $f$ s quasiconformal, there exists a constant $c$ such that $\lambda_n \leq c \cdot \lambda_1$.
Since $\lambda_1^k \leq \sigma_k(f)$ and $J_f \leq \lambda_n^k$, we have 
$$
 |df|^{qk-n} =  \lambda_n^{qk-n} \leq   \frac{(c\lambda_1)^{kq}}{\lambda_n^n} \leq c^{kq}\cdot \frac{\left(\sigma_k(f)\right)^q}{J_f},
$$
this implies that any quasiconformal map in $BD_{(q,\infty)}^{k}(M, \tilde{M})$ is lipschitz if $qk>n$.
If $qk<n$, then $q'(n-k)<n$ and the same argument shows that 
any quasiconformal map in $BD_{(q',\infty)}^{n-k}(M, \tilde{M})$ is lipschitz.
Thus any $f\in  BD_{(q,\infty)}^{k}(M, \tilde{M}) \cap \BD_{(q',\infty)}^{n-k}(M, \tilde{M})$ with  $q \neq \frac{n}{k}$  is lipschitz.
But Proposition\ref{prop.BDMinv} implies that $f^{-1} \in  BD_{(q,\infty)}^{k}(\tilde{M},M) \cap \BD_{(q',\infty)}^{n-k}(\tilde{M},M)$,
hence $f^{-1}$ is also a lipshitz map if $q \neq \frac{n}{k}$.

\qed

\bigskip

\textbf{An open question.}  The previous result  and the Corollary \ref{cor:CrBMD3} suggest the following question:
\emph{Suppose a diffeomorphism $f:M \to \tilde {M}$ induces an isomorphism $f^{*}:L^{q}(\tilde{M},\Lambda^{k})\rightarrow L^{q}(M,\Lambda^{k})$. 
Can we conclude that $f$ is quasiconformal for $q=\frac{n}{k}$ and bilipshitz otherwise?}

\medskip

If $k=1$, the answer to the above question is positive, see  \cite{GGR,GRo,VG76,VU}.

\medskip

For a more complete discussion of quasiconformal maps in the context of differential forms, we refer to \cite{GT2007} .

\section{$L_{q,p}$-cohomology and $\BD$-diffeomorphisms.}

Combining the results of the two previous sections,  we obtain the following theorem.

\begin{theorem}\label{thm:CA}
Suppose $p\leq\tilde{p}<\infty$, and let
$ f:M\rightarrow\tilde{M}$ be a diffeomorphism of the class $\BD^k_{(\tilde{p},t)}(M,\tilde{M})$
where $t = \frac{p}{\tilde{p}-p}$. 
Then the  following holds:
\begin{enumerate}[A.)]
  \item $f^{*}:L^{\tilde{p}}(\tilde{M},\Lambda^{k})\rightarrow L^{p}(M,\Lambda^{k})$ is a bounded operator
  and $f^*(Z_{\tilde{p}}^{k}(\tilde{M})) \subset Z_{{p}}^{k}({M})$.
  \\
 \item If  $q\geq \tilde{q}>1$ and 
  $f\in \BD_{(\tilde{q}',r)}^{n-k+1}(M, \tilde{M}) \cap \BD^k_{(\tilde{p},t)}(M,\tilde{M})$
  with $\tilde{q}' = \frac{\tilde{q}}{\tilde{q}-1}, r =  \frac{q(\tilde{q}-1)}{q-\tilde{q}}$, 
 then 
    $\left[ f^{*}\omega\right]=0$ in $H_{q,p}^{k}(M)$ implies $\left[\omega\right]=0$ in $H_{\tilde{q},\tilde{p}}^{k}(\tilde{M})$
    (thus $H_{q,p}^{k}(M) = 0 \Rightarrow H_{\tilde{q},\tilde{p}}^{k}(\tilde{M})= 0$).
    \\
 \item If  $q\leq \tilde{q}$ and 
 $f\in \BD_{(\tilde{q},u)}^{k-1}(M, \tilde{M}) \cap \BD^k_{(\tilde{p},t)}(M,\tilde{M})$
    where $u = \frac{q}{\tilde{q}-q}$ and $t = \frac{p}{\tilde{p}-p}$, then 
   \vspace{.2cm}
  \begin{enumerate}[\quad a.)]
  \item $f^{*}:\Omega_{\tilde{q},\tilde{p}}^{k-1}(\tilde{M})\rightarrow\Omega_{q,p}^{k-1}(M)$
is a bounded operator, 
  \item $ f^{*}:H_{\tilde{q},\tilde{p}}^{k}(\tilde{M})\rightarrow H_{q,p}^{k}(M)$
is a well defined linear map,
  \item $ f^{*}:\overline{H}_{\tilde{q},\tilde{p}}^{k}(\tilde{M})\rightarrow \overline{H}_{q,p}^{k}(M)$
is a bounded operator. \\
\end{enumerate}
\end{enumerate}
\end{theorem}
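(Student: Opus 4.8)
The plan is to recognize that Theorem \ref{thm:CA} is not a new argument but a synthesis: each conclusion reduces, after translating the integral $\BD$-hypotheses into boundedness of the appropriate pullback operator via the machinery of Section 4, to a direct application of the abstract results of Section 3. The entire task is to check that the stated exponent formulas make the distortion hypotheses coincide exactly with the boundedness hypotheses of Theorem \ref{thm:thA} and Theorem \ref{thm:thB}.

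For part (A), I would simply quote Proposition \ref{pro:prBMD1}: the assumption $f \in \BD^k_{(\tilde{p},t)}(M,\tilde{M})$ with $t = \frac{p}{\tilde{p}-p}$ is precisely its hypothesis, so $f^{*}:L^{\tilde{p}}(\tilde{M},\Lambda^{k})\rightarrow L^{p}(M,\Lambda^{k})$ is bounded. The inclusion $f^*(Z_{\tilde{p}}^{k}(\tilde{M})) \subset Z_{p}^{k}(M)$ then follows from $d f^{*}= f^{*}d$, exactly as in the first lines of the proof of Theorem \ref{thm:thA}: if $d\omega=0$ then $d(f^*\omega)=f^*(d\omega)=0$.

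For part (B), I would verify the two hypotheses of Theorem \ref{thm:thA}. Boundedness of $f^{*}:L^{\tilde{p}}(\tilde{M},\Lambda^{k})\rightarrow L^{p}(M,\Lambda^{k})$ is furnished by part (A). For the second operator $f_{*}:L^{q}(M,\Lambda^{k-1})\rightarrow L^{\tilde{q}}(\tilde{M},\Lambda^{k-1})$, I would apply Corollary \ref{cor:CrBMD2} with $m=k-1$: that corollary requires $f \in \BD^{n-m}_{(s,t)}(M,\tilde{M}) = \BD^{n-k+1}_{(s,t)}(M,\tilde{M})$ with $s = \frac{\tilde{q}}{\tilde{q}-1} = \tilde{q}'$ and $t = \frac{q(\tilde{q}-1)}{q-\tilde{q}} = r$, which is exactly the stated hypothesis. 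Since $q \geq \tilde{q} > 1$, the inequalities $1 \leq \tilde{q} \leq q$ and $p \leq \tilde{p} < \infty$ required by Theorem \ref{thm:thA} hold, so that theorem gives the implication $[f^{*}\omega]=0 \Rightarrow [\omega]=0$, and in particular the vanishing statement.

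For part (C), I would verify the two hypotheses of Theorem \ref{thm:thB}, both of which are instances of Proposition \ref{pro:prBMD1}. The operator in degree $k$ is bounded as in part (A). The operator $f^{*}:L^{\tilde{q}}(\tilde{M},\Lambda^{k-1})\rightarrow L^{q}(M,\Lambda^{k-1})$ follows by applying Proposition \ref{pro:prBMD1} with $(p,\tilde{p})$ replaced by $(q,\tilde{q})$ and $k$ replaced by $k-1$: since $q \leq \tilde{q}$ and $f \in \BD^{k-1}_{(\tilde{q},u)}(M,\tilde{M})$ with $u = \frac{q}{\tilde{q}-q}$, that proposition yields the bound. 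Theorem \ref{thm:thB} then delivers conclusions (a), (b) and (c) verbatim. The whole argument is bookkeeping, and the one point deserving care — which I regard as the main (if modest) obstacle — is to confirm that the three distinct degree shifts (degree $k$ for the leading form, degree $k-1$ for the primitive, and the complementary degree $n-k+1$ governing $f_{*}$ through Proposition \ref{prop.BDMinv}) produce, under the formulas for $t,u,r,\tilde{q}'$, precisely the exponents demanded by the cited results, and that the orderings of $p,\tilde{p},q,\tilde{q}$ remain compatible in each case.
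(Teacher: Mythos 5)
Your proposal is correct and follows essentially the same route as the paper, whose proof is exactly this synthesis: part (A) from Proposition \ref{pro:prBMD1} plus $df^{*}=f^{*}d$, part (B) from Proposition \ref{pro:prBMD1}, Proposition \ref{prop.BDMinv} (packaged as Corollary \ref{cor:CrBMD2}, which is what you invoke) and Theorem \ref{thm:thA}, and part (C) from Proposition \ref{pro:prBMD1} and Theorem \ref{thm:thB}. Your exponent bookkeeping (degree $k-1$ with $u=\frac{q}{\tilde{q}-q}$, complementary degree $n-k+1$ with $s=\tilde{q}'$, $t=r$) is exactly the verification the paper leaves implicit.
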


\medskip

\begin{proof}
The statement  (A) follows immediately from Proposition \ref{pro:prBMD1} and the fact that $df^*\omega = f^*d\omega$,
whereas the assertion (B) follows from Proposition \ref{pro:prBMD1}, Proposition \ref{prop.BDMinv} and Theorem \ref{thm:thA}. \  Finally, the  property (C) follows from  Proposition \ref{pro:prBMD1} and Theorem \ref{thm:thB}.

\end{proof}

\bigskip

 Part (C) of the Theorem gives us  sufficient conditions on a map $f$ to have a functorial behavior in 
 $L_{q,p}$-cohomology.

\section{Some examples}

In this section, we  show how Theorem \ref{thm:CA} can be used to produce vanishing and non vanishing results for the $L_{q,p}$-cohomology of some specific manifolds. The calculations can be quite delicate, even for familiar Riemannian manifolds, and here we only give two simple examples, without trying
to obtain optimal results. 

\subsection{A manifold with a cusp}
  
\medskip

Let us consider the Riemannian manifold $(\tilde{M},g)$ such that $M$ is diffeomorphic to $\r^n$ and $\tilde{g}$
is a Riemannian metric such that in polar coordinates, we have
$$
 \tilde{g} = dr^2 +  e^{-2r} \cdot h
$$
for large enough $r$, where $h$ denotes the standard metric on the sphere  $\mathbb{S}^{n-1}$.
Let us also consider the identity map $f : \r^n \to \tilde{M}$, where $\r^n$ is given its standard euclidean metric, which writes in polar coordinates as
$$
 ds^2 = dr^2 +  r^2 \cdot h.
$$

\begin{proposition}\label{Pcusp1}
If $s>\frac{n-1}{m-1}$, then the above map $f : \r^n \to \tilde{M}$
belongs to the class $\BD^m_{s,t}(\mathbb{R}^n, \tilde{M})$ for any $0<t \leq \infty$.
\end{proposition}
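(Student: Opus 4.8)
The plan is to compute the singular values of $df_x$ explicitly from the warped product structure of the two metrics, determine the asymptotic behavior of $(\sigma_m(f))^s J_f^{-1}$ in the cusp, and then check integrability against the Euclidean volume.

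First I would compute the principal distortion coefficients. Both $ds^2$ and $\tilde g$ are warped products over the same radial variable $r$ with the same spherical factor $h$, and $f$ is the identity in these coordinates, so $df_x$ acts diagonally in an adapted orthonormal frame. The radial direction is preserved isometrically, since both metrics contain $dr^2$, giving one coefficient equal to $1$; each of the $n-1$ spherical directions is scaled by $e^{-r}/r$, obtained by comparing $r^2 h$ with $e^{-2r}h$. Writing $\mu = \mu(r) := e^{-r}/r$, for large $r$ the singular values of $df_x$ are therefore $\mu$ with multiplicity $n-1$ and $1$ with multiplicity $1$.

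Next I would compute the invariants. From the definition of $\sigma_m$, choosing either $m$ spherical directions or $m-1$ spherical directions together with the radial one,
\[
 \sigma_{m}(f) = \binom{n-1}{m}\mu^{m} + \binom{n-1}{m-1}\mu^{m-1},
 \qquad
 J_f = \sigma_n(f) = \mu^{n-1}.
\]
Since $\mu(r)\to 0$ as $r\to\infty$, the term $\binom{n-1}{m-1}\mu^{m-1}$ dominates, so $\sigma_m(f)\asymp \mu^{m-1}$ in the cusp and consequently
\[
 \left(\sigma_{m}(f)\right)^{s} J_f^{-1} \asymp \mu^{(m-1)s-(n-1)}
 = \left(\frac{e^{-r}}{r}\right)^{(m-1)s-(n-1)}.
\]
The key point is that the hypothesis $s>\frac{n-1}{m-1}$ is exactly the condition that the exponent $c:=(m-1)s-(n-1)$ is strictly positive, so this quantity decays exponentially in $r$.

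Finally I would verify integrability. Away from the cusp, i.e. on compact sets, both metrics are smooth and nondegenerate and $f$ is a diffeomorphism, so $(\sigma_m(f))^s J_f^{-1}$ is continuous and bounded there and contributes nothing to divergence. In the cusp region, integrating against the Euclidean volume $dx = r^{n-1}\,dr\,d\mathrm{vol}_{\mathbb{S}^{n-1}}$ yields, for finite $t$, an integrand of the form $e^{-ct\,r}\,r^{\,n-1-ct}$ with $c>0$, whose exponential factor dominates any polynomial factor, so the integral converges; the case $t=\infty$ follows because the quantity tends to $0$. Hence $K_{s,t,m}(f)<\infty$ for every $0<t\le\infty$, which is precisely $f\in \BD^m_{s,t}(\mathbb{R}^n,\tilde M)$. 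I expect the only delicate step to be the correct identification of the singular values from the two warped-product metrics and the observation that the $\mu^{m-1}$ term (not $\mu^{m}$) governs the asymptotics; once this is settled the integrability is routine, since exponential decay overwhelms the polynomial volume growth for all $t$.
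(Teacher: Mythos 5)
Your proposal is correct and follows essentially the same route as the paper: compute the singular values $1$ and $e^{-r}/r$ from the warped-product structure, observe that $\sigma_m(f)$ is dominated by the $\mu^{m-1}$ term so that $(\sigma_m(f))^s J_f^{-1}$ decays like $(e^{-r}/r)^{s(m-1)-(n-1)}$ with positive exponent, and conclude by noting that this exponential decay beats the Euclidean volume growth $r^{n-1}$ for every finite $t$, with the $t=\infty$ case being immediate from boundedness. (Your version is in fact slightly more careful than the paper's, which omits the binomial coefficient $\binom{n-1}{m}$ on the $\mu^m$ term and glosses over the compact part, but these are cosmetic differences.)
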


\medskip

\textbf{Proof}  For $r$ large enough, we have the following principal dilatation coefficients for $f$:
$$
 \lambda_1 = 1, \qquad \lambda_2 = \lambda_3 = \cdots = \lambda_n = \frac{e^{-r}}{r}.
$$
In particular  $J_f = \left( \frac{e^{-r}}{r}\right)^{n-1}$ and 
$$
 \sigma_m(f) = \left( \frac{e^{-r}}{r}\right)^{m} + \binom{n-1}{m-1} \left( \frac{e^{-r}}{r}\right)^{m-1}
  \leq  C_1 \left( \frac{e^{-r}}{r}\right)^{m-1}.
$$
and thus
$$
 \frac{ \left(\sigma_m(f) \right)^{s}}{J_f}  \leq  C_2 \,   \left( \frac{e^{-r}}{r}\right)^{s(m-1) - (n-1)}
$$
outside a compact set in $\r^n$.
Therefore $ \int_{\mathbb{R}^n}   \left( \frac{ \left(\sigma_m(f) \right)^{s}}{J_f} \right)^{t} dx < \infty$
if and only if 
$$
\int_{1}^{\infty}   \left( \frac{e^{-r}}{r}\right)^{t(s(m-1) - (n-1))} \cdot r^{n-1} dr  < \infty
 $$
which is the case when  $s\geq \frac{n-1}{m-1}$. This implies that
$f \in \BD^m_{s,t}(\mathbb{R}^n, \tilde{M})$ for any  $0 < t < \infty$.

It is also clear that $f \in \BD^m_{s,\infty}(\mathbb{R}^n, \tilde{M})$, since $ \frac{ \left(\sigma_m(f) \right)^{s}}{J_f}$ is bounded when  $s\geq \frac{n-1}{m-1}$.

\qed

\bigskip

\begin{corollary}
 \ If $\ds \tilde{q} <\frac{n-1}{k-1} < \tilde{p}$, then $H^k_{\tilde{q},\tilde{p}}(\tilde{M})  = 0$.
\end{corollary}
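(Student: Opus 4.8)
The plan is to deduce this from Theorem \ref{thm:CA}(B) applied to the identity map $f:\r^n\to\tilde{M}$ of Proposition \ref{Pcusp1}, comparing $\tilde{M}$ with Euclidean space, whose $L_{q,p}$-cohomology in intermediate degrees is known to vanish. The first task is to check that $f$ lies in \emph{both} $\BD$-classes required by Theorem \ref{thm:CA}(B). For the degree-$k$ condition I apply Proposition \ref{Pcusp1} with $m=k$ and $s=\tilde{p}$: the hypothesis $\tilde{p}>\frac{n-1}{k-1}$ gives $f\in\BD^k_{\tilde{p},t}(\r^n,\tilde{M})$ for \emph{every} $0<t\leq\infty$, in particular for $t=\frac{p}{\tilde{p}-p}$.

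For the degree-$(n-k+1)$ condition I apply Proposition \ref{Pcusp1} with $m=n-k+1$ and $s=\tilde{q}'=\frac{\tilde{q}}{\tilde{q}-1}$. The threshold $s>\frac{n-1}{m-1}$ then reads $\tilde{q}'>\frac{n-1}{n-k}$, and cross-multiplying (using $\tilde{q}>1$ and $1\leq k\leq n-1$) shows this is \emph{equivalent} to the second hypothesis $\tilde{q}<\frac{n-1}{k-1}$. Hence $f\in\BD^{n-k+1}_{\tilde{q}',r}(\r^n,\tilde{M})$ for every $r$, in particular $r=\frac{q(\tilde{q}-1)}{q-\tilde{q}}$. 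Thus both $\BD$-memberships demanded by Theorem \ref{thm:CA}(B) hold automatically, for \emph{all} admissible exponents $p\leq\tilde{p}$ and $q>\tilde{q}$.

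It then remains to choose a compatible pair $(p,q)$ for which the source cohomology $H^k_{q,p}(\r^n)$ vanishes. Since $\tilde{q}<\frac{n-1}{k-1}<\tilde{p}$, the open interval $(\tilde{q},\tilde{p})$ is nonempty; I would take $p=q=v$ with $v$ in this interval, so that $1<v<\infty$, $v>\tilde{q}$ and $v<\tilde{p}$. The arithmetic constraints of Theorem \ref{thm:CA}(B) are then met ($p\leq\tilde{p}$, $q\geq\tilde{q}>1$, with $t$ and $r$ as above), so the theorem yields $H^k_v(\r^n)=0\ \Rightarrow\ H^k_{\tilde{q},\tilde{p}}(\tilde{M})=0$. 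Invoking the known triviality of the $L_v$-cohomology of Euclidean space in the intermediate degree $1\leq k\leq n-1$ finishes the argument; more generally any admissible pair $(p,q)$ with $H^k_{q,p}(\r^n)=0$ (e.g. a Sobolev-conjugate pair via the bounded Poincaré homotopy operator) would do.

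The main obstacle is precisely this last input: one must exhibit an exponent pair inside the admissible region $\{p\leq\tilde{p},\ q\geq\tilde{q}\}$ and justify $H^k_{q,p}(\r^n)=0$ there — either by the Poincaré lemma together with Sobolev-type $L^p\to L^q$ bounds on the homotopy operator, or by appealing to the companion results of \cite{GT2006}. By contrast, the $\BD$-membership of $f$ and the equivalence of the two exponent thresholds with the stated hypotheses are routine consequences of Proposition \ref{Pcusp1}.
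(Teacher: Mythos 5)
Your verification of the two $\BD$-memberships is correct and coincides exactly with the paper's argument: $\tilde p>\frac{n-1}{k-1}$ gives $f\in\BD^k_{(\tilde p,t)}(\r^n,\tilde M)$ for every $t$, and $\tilde q<\frac{n-1}{k-1}$ is equivalent to $\tilde q'>\frac{n-1}{n-k}$, giving $f\in\BD^{n-k+1}_{(\tilde q',r)}(\r^n,\tilde M)$ for every $r$. The gap is in the final cohomological input. Your primary choice $p=q=v$ with $v\in(\tilde q,\tilde p)$ does not work, because the (unreduced) $L_v$-cohomology of $\r^n$ in intermediate degrees is \emph{not} zero: the very result from \cite{T2008a} that this paper relies on states that $H^k_{q,p}(\r^n)\neq 0$ \emph{unless} $\frac1p-\frac1q=\frac1n$, and this relation fails identically when $p=q$. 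The obstruction is scaling: pulling back by the dilation $x\mapsto\lambda x$ multiplies the $L^p$-norm of a $k$-form by $\lambda^{k-n/p}$ and the $L^q$-norm of a $(k-1)$-form by $\lambda^{(k-1)-n/q}$, so an estimate $\inf_{d\theta=\omega}\Vert\theta\Vert_q\leq C\Vert\omega\Vert_p$ valid on all of $\r^n$ forces $\frac1p-\frac1q=\frac1n$. The Poincar\'e homotopy operator is bounded $L^p\to L^p$ only on bounded domains; it provides no global primitive on $\r^n$, and indeed the unreduced $L_v$-cohomology there is nontrivial.

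Your fallback remark --- that ``a Sobolev-conjugate pair would do'' --- is precisely the paper's route, but it cannot be left as an aside: one must also check that a Sobolev pair can be placed inside the admissible region $\{p\leq\tilde p,\ q\geq\tilde q\}$. The paper takes $p=\frac nk$ and $q=\frac n{k-1}$ (so that $\frac1p-\frac1q=\frac1n$ and $H^k_{q,p}(\r^n)=0$ by \cite{T2008a}; note the paper's text misprints this as $\neq 0$, but vanishing is what the argument uses) and then verifies the crucial arithmetic that this pair straddles the threshold: $\frac nk\leq\frac{n-1}{k-1}\leq\frac n{k-1}$, whence $p\leq\frac{n-1}{k-1}<\tilde p$ and $q\geq\frac{n-1}{k-1}>\tilde q$. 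With that choice Theorem \ref{thm:CA}(B) yields $H^k_{q,p}(\r^n)=0\Rightarrow H^k_{\tilde q,\tilde p}(\tilde M)=0$. So your proof becomes complete once you replace the pair $p=q=v$ by the Sobolev pair $\left(\frac nk,\frac n{k-1}\right)$ and add this straddling check; as written, the argument rests on a false vanishing statement for Euclidean $L_p$-cohomology.
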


\textbf{Proof} \\ We will use Theorem \ref{thm:CA}(B) with the previous Proposition.
We have  $f\in  \BD^k_{(\tilde{p},t)}(\r^n,\tilde{M})$  for any $t>0$, since we have 
$\tilde{p} > \frac{n-1}{k-1}$ by hypothesis.
We also have  $f\in \BD_{(\tilde{q}',r)}^{n-k+1}(\r^n, \tilde{M})$ 
if $\tilde{q}'  >\frac{n-1}{n-k}$. But this inequality is equivalent to
$$
  \tilde{q} = \frac{\tilde{q}'}{\tilde{q}'-1} <\frac{n-1}{k-1},
$$
and this also holds by hypothesis.
We thus have  $f\in\BD_{(\tilde{q}',r)}^{n-k+1}(\r^n, \tilde{M}) \cap \BD^k_{(\tilde{p},t)}(\r^n, \tilde{M})$
for any $\tilde{q} <\frac{n-1}{k-1} < \tilde{p}$.

Let us now set $p=\frac{n}{k}$ and $q=\frac{n}{k-1}$, and observe that $p \leq \frac{n-1}{k-1}$,
hence $p \leq \tilde{p}$ and $q \geq \frac{n-1}{k-1}$, hence $q \geq \tilde{q}$.

In \cite{T2008a}, it is  proved that $H^k_{q,p}(\mathbb{R}^n)\neq 0$ if $p=\frac{n}{k}$ and $q=\frac{n}{k-1}$.
Therefore by Theorem \ref{thm:CA} we have  $H^k_{\tilde{q},\tilde{p}}(\tilde{M})  = 0$
 for any $\ds \tilde{q} <\frac{n-1}{k-1} < \tilde{p}$.

\qed

\subsection{The hyperbolic space}
  
\medskip

Let us denote by  $\mathbb{H}^n$ the hyperbolic space of dimension $n$.
Recall that $\mathbb{H}^n$ can be described in polar coordinate
as follow:
$$\mathbb{H}^n = [0, \infty) \times \mathbb{S}^{n-1} / (\{0\} \times \mathbb{S}^{n-1}),$$
with the Riemannian metric 
$$
 g = dr^2 + \sinh(r)^2 h,
$$
where $h$ is the standard metric on the sphere $ \mathbb{S}^{n-1}$. Likewise, the euclidean space $\r^n$ 
is given by $\mathbb{R}^n = [0, \infty) \times \mathbb{S}^{n-1} / (\{0\} \times \mathbb{S}^{n-1})$, 
with the Riemannian metric 
$
 ds^2 = dr^2 + r^2 h.
$

Let us consider the identity map $f : \mathbb{H}^n \to \mathbb{R}^n$ (which is, from an intrinsic viewpoint, the inverse of the
exponential map $\exp_x : T_x\mathbb{H}^n =\mathbb{R}^n\to \mathbb{H}^n$).

\begin{proposition}\label{PH}
 The above map $f : \mathbb{H}^n \to \r^n$ belongs to the class $\BD^m_{s,t}(\mathbb{H}^n, \r^n)$ for $1\leq s< \infty$, $0<t \leq \infty $
 if and only if 
\begin{equation}\label{eq.condHn}
  s > \frac{n-1}{m-1}\left(1 + \frac{1}{t} \right)
\end{equation}
 and belongs $\BD^m_{s,\infty}(\mathbb{H}^n, \r^n)$  if and only if 
 \begin{equation}\label{eq.condHn1}
  s \geq \frac{n-1}{m-1}.
\end{equation}
\end{proposition}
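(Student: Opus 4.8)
The plan is to follow exactly the strategy used for Proposition~\ref{Pcusp1}: first compute the principal dilatation coefficients of $f$, then extract the asymptotic behaviour of $\left(\sigma_m(f)\right)^s J_f^{-1}$ as $r\to\infty$, and finally decide membership in $\BD^m_{s,t}$ by analysing a single one-dimensional integral against the volume element of $\mathbb{H}^n$.

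First I would compute the singular values. Since $f$ is the identity in polar coordinates, $df$ fixes $\partial_r$ and the spherical directions, so comparing $g = dr^2 + \sinh(r)^2 h$ with $ds^2 = dr^2 + r^2 h$ gives
$$
\lambda_1 = 1, \qquad \lambda_2 = \cdots = \lambda_n = \frac{r}{\sinh(r)} =: \mu,
$$
so that $J_f = \mu^{n-1}$ and (as in the cusp computation) $\sigma_m(f) = \binom{n-1}{m-1}\mu^{m-1} + \binom{n-1}{m}\mu^{m}$. Since $\mu\to 0$ as $r\to\infty$, the lower power dominates and there are constants $0 < C_1 \le C_2$ with $C_1\mu^{m-1} \le \sigma_m(f) \le C_2\mu^{m-1}$ for $r$ large; hence
$$
\frac{\left(\sigma_m(f)\right)^{s}}{J_f} \asymp \mu^{\,s(m-1)-(n-1)}
$$
outside a compact set. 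This at once settles (\ref{eq.condHn1}): as $\mu\to 0$, the quantity is bounded if and only if the exponent is nonnegative, that is $s\ge \frac{n-1}{m-1}$ (and everything is bounded on compact sets because $f$ is a smooth diffeomorphism, with $\mu\to 1$ at the origin).

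For $t<\infty$ I would integrate $\left(\left(\sigma_m(f)\right)^s J_f^{-1}\right)^t$ against the hyperbolic volume element $dV = \sinh(r)^{n-1}\,dr\,d\omega$. Up to a positive constant coming from the sphere and from the harmless compact part near $r=0$, finiteness is equivalent to convergence of
$$
\int^{\infty}\left(\frac{r}{\sinh(r)}\right)^{t\left(s(m-1)-(n-1)\right)}\sinh(r)^{n-1}\,dr.
$$
Writing $a = s(m-1)-(n-1)$ and using $\sinh(r)\sim \tfrac12 e^{r}$, the integrand is comparable to $r^{ta}e^{((n-1)-ta)r}$, which is integrable near $+\infty$ precisely when $(n-1)-ta<0$; at the borderline $ta=n-1$ the exponentials cancel and a divergent power of $r$ survives. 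Since $ta>n-1$ rearranges to $s>\frac{n-1}{m-1}\left(1+\frac1t\right)$, this is exactly (\ref{eq.condHn}).

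The only genuinely delicate point is obtaining the \emph{if and only if} in both regimes at once, which is why I rely on the two-sided estimate $C_1\mu^{m-1}\le\sigma_m(f)\le C_2\mu^{m-1}$: the upper bound gives integrability (resp.\ boundedness) when the stated condition holds, while the lower bound forces divergence (resp.\ unboundedness) when it fails. Everything then reduces to the exponential competition between the decay of $\mu = r/\sinh(r)$ and the growth of $\sinh(r)^{n-1}$, and it is precisely this competition --- with no integration in the $t=\infty$ case but an extra integration when $t<\infty$ --- that accounts for the non-strict inequality in (\ref{eq.condHn1}) against the strict one in (\ref{eq.condHn}).
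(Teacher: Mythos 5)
Your proof is correct and follows essentially the same route as the paper's: compute the principal dilatation coefficients $\lambda_1=1$, $\lambda_2=\cdots=\lambda_n=r/\sinh(r)$, extract the asymptotics of $\left(\sigma_m(f)\right)^s J_f^{-1}$, and reduce membership in $\BD^m_{s,t}$ to the convergence of the one-dimensional integral $\int^\infty \left(r/\sinh r\right)^{t(s(m-1)-(n-1))}\sinh(r)^{n-1}\,dr$. If anything, your version is slightly more careful than the paper's, since you make the two-sided estimate $C_1\mu^{m-1}\le\sigma_m(f)\le C_2\mu^{m-1}$ explicit (the paper only writes the upper bound while still asserting the \emph{if and only if}) and you explicitly dispose of the compact part near $r=0$.
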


\medskip

\textbf{Proof} We clearly have the following principal dilatation coefficients for $f$:
$$
 \lambda_1 = 1, \qquad \lambda_2 = \lambda_2 = \cdots = \lambda_n = \frac{r}{\sinh(r)}.
$$

Therefore
$$
 \sigma_m(f) = \left( \frac{r}{\sinh(r)} \right)^{m} + \binom{n-1}{m-1} \left( \frac{r}{\sinh(r)} \right)^{m-1}
  \leq \text{const. }  \left( \frac{r}{\sinh(r)} \right)^{m-1},
$$
and thus
$$
 \frac{ \left(\sigma_m(f) \right)^{s}}{J_f}  \leq  C \,   \left( \frac{r}{\sinh(r)} \right)^{s(m-1) - (n-1)},
$$
It follows that $f \in \BD^m_{s,\infty}(\mathbb{H}^n, \r^n)$ if and only if $s\geq \frac{n-1}{m-1}$.
Likewise, $f \in \BD^m_{s,t}(\mathbb{H}^n, \r^n)$ for some $t < \infty$ when the integral
$$
 \int_{\mathbb{H}^n}   \left( \frac{ \left(\sigma_m(f) \right)^{s}}{J_f} \right)^{t}
 \leq \mathrm{const.}  \int_{0}^{\infty}  \left( \frac{r}{\sinh(r)} \right)^{t(s(m-1) - (n-1))}\cdot (\sinh(r))^{n-1} dr 
 $$
is finite. This is the case if and only if 
$$
 t(s(m-1) - (n-1)) > (n-1).
$$
And this inequality is equivalent to (\ref{eq.condHn}).

\qed

\begin{corollary}
 \ If $\ds q <\frac{n-1}{k-1} < p$, then $H^k_{q,p}(\mathbb{H}^n)  \neq 0$.
\end{corollary}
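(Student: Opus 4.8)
The plan is to obtain the non-vanishing of $H^k_{q,p}(\mathbb{H}^n)$ by pulling back, along the comparison map $f:\mathbb{H}^n\to\r^n$ of Proposition \ref{PH}, a non-vanishing cohomology class of $\r^n$. Concretely I would apply Theorem \ref{thm:thA} in the ``diagonal'' case $M=\mathbb{H}^n$, $\tilde M=\r^n$, $\tilde p=p$, $\tilde q=q$ (equivalently Theorem \ref{thm:CA}(B) read at $t=r=\infty$); its conclusion is the implication $H^k_{q,p}(\mathbb{H}^n)=0\Rightarrow H^k_{q,p}(\r^n)=0$, and reading this contrapositively reduces the corollary to the single statement $H^k_{q,p}(\r^n)\neq0$.

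To apply Theorem \ref{thm:thA} I must check that the operators $f^{*}:L^{p}(\r^n,\Lambda^{k})\to L^{p}(\mathbb{H}^n,\Lambda^{k})$ and $f_{*}:L^{q}(\mathbb{H}^n,\Lambda^{k-1})\to L^{q}(\r^n,\Lambda^{k-1})$ are bounded. For the first, Proposition \ref{pro:prBMD1} (taken at $\tilde p=p$, hence $t=\infty$) reduces boundedness to $f\in\BD^k_{(p,\infty)}(\mathbb{H}^n,\r^n)$, and by the $t=\infty$ case of Proposition \ref{PH}, inequality (\ref{eq.condHn1}), this membership is equivalent to $p\geq\frac{n-1}{k-1}$. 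For the second, Corollary \ref{cor:CrBMD2} (taken at $\tilde q=q$) reduces boundedness to $f\in\BD^{n-k+1}_{(q',\infty)}(\mathbb{H}^n,\r^n)$ with $q'=\frac{q}{q-1}$; again by (\ref{eq.condHn1}) this is equivalent to $q'\geq\frac{n-1}{n-k}$, which upon taking reciprocals is exactly $q\leq\frac{n-1}{k-1}$. Both distortion conditions therefore hold, in fact strictly, under the hypothesis $q<\frac{n-1}{k-1}<p$ (which also forces $q<p$ and, as usual, $q>1$ so that $q'$ is finite and Theorem \ref{thm:thA} applies).

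With both operators bounded, Theorem \ref{thm:thA} gives the implication above, so it remains to record that $H^k_{q,p}(\r^n)\neq0$. Here $q<p$ forces $\frac1p-\frac1q<0<\frac1n$, so the exponent pair $(q,p)$ lies strictly off the critical Sobolev line $\frac1p-\frac1q=\frac1n$; in this off-critical regime a suitable closed $L^{p}$ $k$-form on $\r^n$ has no $L^{q}$ primitive, since any primitive produced by the Poincar\'e homotopy is less integrable at infinity than membership in $L^{q}$ would require. Hence $H^k_{q,p}(\r^n)\neq0$ by the non-vanishing results of \cite{T2008a}, and combining this with the contrapositive implication yields $H^k_{q,p}(\mathbb{H}^n)\neq0$.

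The delicate point is precisely this input on $\r^n$: one must ensure that a non-vanishing theorem for $H^k_{q,p}(\r^n)$ is available throughout the open range $q<\frac{n-1}{k-1}<p$, and not merely at the single critical pair $p=\frac nk$, $q=\frac{n}{k-1}$. The critical pair cannot be exploited here, for it lies outside the admissible range (indeed $\frac{n}{k-1}>\frac{n-1}{k-1}$ while $\frac nk<\frac{n-1}{k-1}$), and under the constraints $p\leq\tilde p$, $q\geq\tilde q$ of Theorem \ref{thm:thA} it can never be reached from exponents with $p>\frac{n-1}{k-1}$. By contrast the two distortion memberships are routine once the endpoint $t=r=\infty$ is handled as above.
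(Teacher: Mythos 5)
Your proof is correct and takes essentially the same route as the paper: the comparison map $f:\mathbb{H}^n\to\mathbb{R}^n$ of Proposition \ref{PH}, the two distortion memberships, Theorem \ref{thm:thA} (i.e.\ Theorem \ref{thm:CA}(B)) read contrapositively, and the non-vanishing of $H^k_{\tilde q,\tilde p}(\mathbb{R}^n)$ from \cite{T2008a}. The only difference is cosmetic: you specialize to the diagonal $\tilde p=p$, $\tilde q=q$ (endpoint $t=r=\infty$, via (\ref{eq.condHn1})), whereas the paper keeps $\tilde p\geq p$, $\tilde q\leq q$ with finite $t,r$ and then ``chooses some values''; both work for the same reason you make explicit, namely that $q<p$ automatically places the pair off the critical line $\frac1{\tilde p}-\frac1{\tilde q}=\frac1n$.
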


\textbf{Proof}  We will use Theorem \ref{thm:CA} with the previous Proposition.
We have  $f\in  \BD^k_{(\tilde{p},t)}(\mathbb{H}^n, \r^n)$ with $t = \frac{p}{\tilde{p}-p}$ if and only if 
$$
 \tilde{p} >\frac{n-1}{k-1}\left(1 + \frac{1}{t} \right) = 
 \frac{n-1}{k-1}\left(1 +  \frac{\tilde{p}-p}{p} \right) =
  \frac{n-1}{k-1}\cdot \frac{\tilde{p}}{p},
$$
i.e. 
$$p>\frac{n-1}{k-1}.$$

\medskip

Likewise,  $f\in \BD_{(\tilde{q}',r)}^{n-k+1}(\mathbb{H}^n, \r^n)$ with 
$\tilde{q}' = \frac{\tilde{q}}{\tilde{q}-1}, r =  \frac{q(\tilde{q}-1)}{q-\tilde{q}}$ if and only if
$$
 \tilde{q}' =   \frac{\tilde{q}}{\tilde{q}-1} >\frac{n-1}{n-k}\left(1 + \frac{1}{r} \right) = 
 \frac{n-1}{n-k}\left(1 +  \frac{q-\tilde{q}}{q(\tilde{q}-1)} \right).
$$
This inequality is equivalent to 
$$
 \tilde{q} > \frac{n-1}{n-k}\left((\tilde{q}-1) + \frac{q-\tilde{q}}{q}\right) 
 = \frac{n-1}{n-k} \left(1 - \frac{1}{q}\right) \tilde{q},
$$
or, finally
$$
 q < \frac{n-1}{k-1}.
$$

We proved that $f\in\BD_{(\tilde{q}',r)}^{n-k+1}(\mathbb{H}^n,\mathbb{R}^n) \cap \BD^k_{(\tilde{p},t)}(\mathbb{H}^n,\mathbb{R}^n)$ whenever
\begin{equation} \label{ccond}
  q < \frac{n-1}{k-1}<p, \quad  p \leq \tilde{p}, \ \quad   q \geq \tilde{q},  \ \quad 
   t = \frac{p}{\tilde{p}-p}, \ \quad  r =  \frac{q(\tilde{q}-1)}{q-\tilde{q}}.
\end{equation}
In \cite{T2008a}, it is  proved that $H^k_{\tilde{q},\tilde{p}}(\mathbb{R}^n)\neq 0$ unless $\frac{1}{\tilde{p}} - \frac{1}{\tilde{q}} = \frac{1}{n}$. 
Therefore one can choose some values of $\tilde{p},\tilde{q}$ compatible with the conditions (\ref{ccond}) and use Theorem \ref{thm:CA}  to conclude that $H^k_{q,p}(\mathbb{H}^n)\neq 0$ for any 
$q < \frac{n-1}{k-1}<p.$

\qed

\bigskip

The result given in the previous Theorem is not optimal and we shall discuss the $L_{q,p}$-cohomology
of the hyperbolic space and other manifolds with negative curvature in a another paper.

\section{Non-smooth mappings}

We have formulated our results for diffeomorphisms, but it is clear that  the Definition \ref{def.BMD} makes sense
for wider classes of maps such as Sobolev maps in $W^{1,1}_{loc}$ or maps which are approximately differentiable 
almost everywhere, we can thus consider the class of $W^{1,1}_{loc}$ homeomorphisms with bounded mean distortion.
It is then natural and important to wonder whether our results still hold in this wider context.

\medskip

Unfortunately, there is no elementary  answer to this question. A careful look at our arguments show that we have used
the following properties of diffeomorphisms:
\begin{enumerate}[i.)]
  \item The \emph{change of variables formula in integrals} : $\int_M u(f(x)) \, J_f(x) dx = \int_{\tilde{M}} u(y) \, dy$ \
 in Proposition \ref{pro:prBMD1}).
\item The \emph{change of variables formula for the inverse map} : $\int_M u(f(x)) \, J_f(x) dx = \int_{\tilde{M}} u(y) \, dy$, \
 this is implicitly used in  Corollary \ref{cor:CrBMD2}.
  \item The naturality of the exterior differential $df^*\omega = f^*d\omega$  is used everywhere.
\end{enumerate}

\medskip

The change  of variables formula in integrals holds for a homeomorphism $f$ in $W^{1,1}_{loc}$ provided we assume the 
\emph{Luzin $(N)$ condition} to hold. This condition states that a subset of zero measure in $M$ is mapped by $f$ onto a set of
zero measure in $\tilde{M}$. The map change of variables formula for the inverse map $f^{-1}$ holds if the Luzin $(N^{-1})$ condition holds,
that is the inverse image of  subset of zero measure also has  zero measure. The Luzin condition is widely studied in the literature (see, for example, \cite{VG76,HK1993,KKM2001,HK2005}).
Concerning the naturality of the exterior differential, we refer to \cite{GT2008}.

\medskip

Let finally mention that for the special case of quasiconformal mapping, all these properties hold. The relation between the
theory of quasiconformal mappings and $L_{qp}$-cohomology is studied in   \cite{GT2007}.


\end{document}